\pgfplotsset{compat=1.7}
\setlist[enumerate,1]{label=(\roman*)}
\def\Aut{\operatorname{Aut}}
\def\SAut{\operatorname{SAut}}
\def\Pic{\operatorname{Pic}}
\def\AffCone{\operatorname{AffCone}}
\def\Ample{\operatorname{Ample}}
\def\Eff{\operatorname{\overline{NE}}}
\def\supp{\operatorname{supp}}
\def\PP{{\mathbb P}}
\def\ZZ{{\mathbb Z}}
\def\RR{{\mathbb R}}
\def\QQ{{\mathbb Q}}
\def\NN{{\mathbb N}}
\def\G{{\mathbb G}}
\def\AA{{\mathbb A}}
\def\PP{{\mathbb P}}
\def\K{{\mathbb K}}
\def\embed{\hookrightarrow}
\def\0{\circ}
\def\isom{\stackrel{\sim}{\longrightarrow} } %
\def\Aut{\operatorname{Aut}}
\def\SAut{\operatorname{SAut}}
\def\Pic{\operatorname{Pic}}
\def\AffCone{\operatorname{AffCone}}
\def\Ample{\operatorname{Ample}}
\def\relint{\operatorname{rel.int.}}
\def\Cone{\operatorname{Cone}}
\def\Eff{\operatorname{\overline{NE}}}
\def\supp{\operatorname{supp}}
\def\Pol{\operatorname{Pol}}
\def\Forb{\operatorname{Forb}}
\def\Subd{\operatorname{Subd}}
\theoremstyle{plain}
\newtheorem{theorem}{Theorem}[section]
\newtheorem{lemma}[theorem]{Lemma}
\newtheorem*{conjecture*}{Conjecture}
\newtheorem{question}[theorem]{Question}
\newtheorem{proposition}[theorem]{Proposition}
\theoremstyle{definition}
\newtheorem{definition}[theorem]{Definition}
\newtheorem{example}[theorem]{Example}
\newtheorem{remark}[theorem]{Remark}
\newtheorem{convention}[theorem]{Convention}
\begin{document}

\author{Alexander Perepechko}

\address{
  Faculty of Computer Science, HSE University,
  Pokrovsky blvd. 11, Moscow, 109028 Russia
}

\email{a@perep.ru}

\title{Generic flexibility of affine cones over  del Pezzo surfaces in Sagemath}

\thanks{The research was carried out at the HSE University at the expense of the Russian Science Foundation (project no. 21-71-00062).}

\dedicatory{To Yuri Prokhorov on the occasion of his 60th birthday}

\begin{abstract}
Generic flexibility of affine cones over Fano varieties is a subject of active study recently.
For del Pezzo surfaces the question is completely studied in degree at least 3, and partially in degree 2.

We present a Sagemath module that facilitates most operations for verifying the generic flexibility of affine cones over del Pezzo surfaces and weak del Pezzo surfaces of arbitrary degree, depending on a polarization. The combinatorial approach used in this module is based on the formalism of bubble cycles and the colimit of Picard groups of blowups of the projective plane.

As an example, we verify generic flexibility of affine cones over some polarizations of surfaces of degree 1 under certain conditions and over arbitrary very ample polarizations of weak del Pezzo surfaces of degree 6. 
\end{abstract}
\maketitle

\section{Introduction}

Let $Y$ be a (weak) del Pezzo surface of arbitrary degree polarized by a very ample divisor $H$. We study when the affine cone over $Y$ is generically flexible. Equivalently, such a cone has an open subset with an infinitely transitive action of the special automorphism group on it.

We are working over an algebraically closed field $\K$ of characteristic zero. 
Let $X$ be an affine algebraic variety over $\K$.
A point $p\in X$ is called \emph{flexible} if the tangent space $T_pX$ is spanned by the tangent vectors to the orbits of actions of the additive group of the field $\G_a=\G_a(\K)$ on $X$. 
The variety $X$ is called \emph{generically flexible} if it contains a flexible point, and \emph{flexible} if each smooth point of $X$ is flexible, see \cite{AFKKZ},  \cite{Pe20}.

All the $\G_a$-actions on $X$ generate the \emph{special automorphism group} $\SAut X\subset\Aut X$.
 A group $G$ is said to act on a set $S$ \emph{infinitely transitively} if it acts transitively on the set of ordered $m$-tuples of pairwise distinct points in $S$ for any $m\in\NN$.
 The following theorem explains the significance of the flexibility concept.
\begin{theorem}[{\cite[Theorem 2.2]{AFKKZ}}]\label{th:AFKKZ}
  Let $X$ be an affine algebraic variety of dimension $\ge2$. Then the following conditions are equivalent:
  \vspace{-2pt}
  \begin{enumerate}
    \item The variety $X$ contains a flexible point;
    \item the group $\SAut X$ acts on $X$ with an open orbit;
    \item the action of $\SAut X$ on the open orbit is infinitely transitive.
  \end{enumerate}
  Then all points of the open orbit are flexible.
\end{theorem}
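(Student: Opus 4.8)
The plan is to prove the implications $(1)\Rightarrow(2)$, $(2)\Rightarrow(1)$, $(2)\Rightarrow(3)$ and $(3)\Rightarrow(2)$, together with the final assertion, the engine throughout being two structural features of $\SAut X$: it is generated by one-parameter unipotent subgroups $H=\{\exp(t\delta)\}$, $\delta$ a locally nilpotent derivation, and this generating family is \emph{saturated} in the sense that $f\delta$ is again locally nilpotent whenever $f\in\ker\delta$. I will use freely three preliminary facts: the set of flexible points of $X$ is Zariski open; it is stable under $\Aut X$, hence under $\SAut X$; and every $\SAut X$-orbit is connected, since $\SAut X$ is generated by the connected groups $\G_a$.

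For $(1)\Rightarrow(2)$, start from a flexible point $p$ and choose finitely many $\G_a$-actions $H_1,\dots,H_n$ whose velocity vectors at $p$ span $T_pX$. The morphism $\varphi\colon H_1\times\dots\times H_n\to X$, $(h_1,\dots,h_n)\mapsto h_1\cdots h_n\cdot p$, then has surjective differential at the identity, so (in characteristic zero) its image is constructible and contains a Zariski-open neighbourhood of $p$. Hence the orbit $O=\SAut(X)\cdot p$ is open; as $\SAut X$ is connected, $O$ is connected and dense, and it is the unique open orbit. By $\Aut X$-invariance of flexibility, every point of $O$ is flexible, which is exactly the last clause of the theorem. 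For the converse $(2)\Rightarrow(1)$, apply the same submersion argument at a general point of an iterated-flow parametrization of $O$: such a point turns out to be flexible, and flexibility then propagates over the whole orbit.

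The substantial implication is $(2)\Rightarrow(3)$. Assuming $O$ is an open $\SAut X$-orbit, one proves by induction on $m$ that $\SAut X$ acts $m$-transitively on $O$, the case $m=1$ being the definition of an orbit. Given distinct tuples $(p_1,\dots,p_m)$ and $(q_1,\dots,q_m)$ in $O$, the inductive hypothesis reduces us to the case $p_i=q_i$ for $i<m$, so it remains to move $p_m$ to $q_m$ while fixing $q_1,\dots,q_{m-1}$. The key device: if $\delta$ is locally nilpotent and $f\in\ker\delta$ vanishes at $q_1,\dots,q_{m-1}$, then $f\delta$ is locally nilpotent and $\exp(t\,f\delta)$ fixes $q_1,\dots,q_{m-1}$; since $\dim X\ge2$, the relevant kernels have positive transcendence degree, so such functions $f$ are plentiful. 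One then shows that the velocity vectors at $p_m$ of the flows $\exp(t\,f\delta)$, over all admissible pairs $(\delta,f)$, still span $T_{p_m}X$; feeding this spanning family into the submersion argument of the previous paragraph produces a finite composition of such flows carrying $p_m$ to $q_m$ without disturbing $q_1,\dots,q_{m-1}$, which closes the induction. Finally $(3)\Rightarrow(2)$ is immediate, since infinite transitivity on a set forces that set to be an open orbit.

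The main obstacle lies precisely in that spanning claim inside the inductive step. For a fixed $\delta$ realizing a prescribed direction $v$ at $p_m$, every function in $\ker\delta$ is constant along the $\delta$-orbit through $p_m$; if that orbit happens to pass through some $q_i$, then no $f\in\ker\delta$ can vanish at $q_i$ and be nonzero at $p_m$, so the velocity vector in the direction $v$ is lost. Since multiplying $\delta$ by a kernel element does not change its kernel, escaping this requires switching to a genuinely different locally nilpotent derivation that realizes the same direction $v$ at $p_m$ but places $p_m$ in general position relative to $q_1,\dots,q_{m-1}$. Making this precise — exploiting that $\SAut X$ is generated by the full, saturated family of all $\G_a$-actions, so that for each direction there is enough room for such a replacement, and checking compatibility with $\dim X\ge2$ and with the openness of $O$ — is the technical heart of the argument.
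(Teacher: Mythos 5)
First, note that the paper itself gives no proof of this statement: it is quoted verbatim from \cite[Theorem 2.2]{AFKKZ}, so the only benchmark is the original argument there, whose overall architecture (saturation $f\delta$ for $f\in\ker\delta$, openness of the orbit via composed flows, induction on the number of points) your outline reproduces faithfully. The implication $(1)\Rightarrow(2)$ and the final clause are essentially right, up to two small repairs: surjectivity of $d\varphi$ at the identity gives, via Chevalley, a dense open subset of $X$ inside the image, not immediately a neighbourhood of $p$; one then uses homogeneity of the orbit to translate this open piece and conclude openness. Likewise, in $(2)\Rightarrow(1)$ the vectors spanning the image of the differential at a general parameter value are velocity vectors of \emph{conjugated} one-parameter subgroups, and one must observe that a conjugate of a $\G_a$-action by an automorphism is again a $\G_a$-action, so the image point is flexible; this is the point your ``turns out to be flexible'' silently uses.

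The genuine gap is exactly where you locate it, and locating it is not the same as closing it: the inductive step of $(2)\Rightarrow(3)$ requires showing that, for $p_m$ and the finite set $\{q_1,\dots,q_{m-1}\}$ in the open orbit, the velocity vectors at $p_m$ of flows $\exp(t\,f\delta)$ with $f\in\ker\delta$ vanishing at all $q_i$ and $f(p_m)\neq 0$ still span $T_{p_m}X$ (or at least generate a subgroup acting transitively near $p_m$ with the $q_i$ fixed). As you note, for a fixed $\delta$ this can fail outright when the $\delta$-orbit through $p_m$ meets some $q_i$, since kernel functions cannot separate points on one orbit; escaping this requires producing, for each needed direction, a \emph{different} locally nilpotent derivation realizing it whose orbit through $p_m$ avoids the $q_i$, and proving that such replacements always exist is precisely the technical content of the AFKKZ machinery of saturated sets of locally nilpotent derivations (their Theorems 1.11 and 2.2), where $\dim X\ge 2$ and the openness of the orbit enter in an essential, quantitative way. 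Your text declares this ``the technical heart'' and stops there, so what you have is a correct roadmap of the known proof with its central lemma left unproven, not a proof.
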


Affine cones are a rich source of flexible and generically flexible affine algebraic varieties. Affine cones over polarizations of del Pezzo surfaces are extensively studied in last 10 years.
 The flexibility of affine cones over del Pezzo surfaces of degree $\ge4$ corresponding to any very ample polarization was confirmed in \cite{AKZ}, \cite{PW16}, \cite{P13}.

 The generic flexibility of affine cones over cubic surfaces was confirmed in \cite{Pe20} for polarizations by any very ample divisor $H$ which is not plurianticanonical, i.e., not proportional to the anticanonical one;
 the absence of $\G_a$-actions for plurianticanonical polarizations was proven in \cite[Corollary 1.8]{CPW13}. 
 The case of degree 2 is studied in \cite{KimPark21} and \cite{KimWon25}.
 
 General theory and questions on flexible varieties are presented in \cite{Arzhantsev23}, \cite{Kaliman21}, \cite{Kaliman2022embedding}.
 There are also studies on flexibility of affine cones over singular del Pezzo surfaces \cite{Won22}, \cite{Saw24} and over Fano--Mukai fourfolds \cite{PZ21flex}, \cite{hoff2022flexibility}, \cite{HangTruong2023flex}.

 The computations for del Pezzo surfaces of lower degrees depend heavily on the polarization and involve dealing with families of ample divisors as polyhedral cones of large dimension.
 Cylinders themselves and (weak) del Pezzo surfaces can be described with combinatorial data based on the formalism of bubble cycles and the colimit of Picard groups.
 
 Thus, we present a Sagemath module \cite{Pe23-sage} based on this combinatorial approach that facilitates most operations with said cones and allows verifying of the  generic flexibility of affine cones over del Pezzo surfaces of arbitrary degree.
 As an application, we check generic flexibility in degree 1 for two families of polarizations, see Theorems~\ref{th:deg1-B} and \ref{th:deg1-C}.

 Moreover, we adapted the module for weak del Pezzo surfaces and proved that affine cones over weak del Pezzo surfaces of degree 6 are generically flexible for all very ample polarizations, see Theorem~\ref{th:deg6-weak}.

 Let us outline the contents of the paper.
 In Section~\ref{sec:flex-crit} we recall definitions on cylinder collections and a criterion of generic flexibility for affine cones from \cite{Pe20}.
 In Section~\ref{sec:bubble-cycles} we introduce bubble classes in terms of bubble cycles and the colimit of Picard groups, and study their properties.
 In Section~\ref{sec:bubble-systems} we propose a way to describe combinatorial types of (weak) del Pezzo surfaces and cylinders on them in terms of linear systems and corresponding bubble classes.
In Section~\ref{sec:cones} we recall notation on cones in the Mori cone of a del Pezzo surface $Y$. We also introduce open subdivisions of such cones in Definition~\ref{def:subd} and Proposition~\ref{pr:subd}.
In Section~\ref{sec:cylinders} we present some known constructions of cylinder collections and their description in terms of bubble classes. 
In Section~\ref{sec:deg2} we provide examples of their usage for surfaces of degree $\ge2$ with the corresponding code.
 In Section~\ref{sec:deg1} we provide some new results in degree 1.
 In Section~\ref{sec:weak} we prove generic flexibility for (non-toric) weak del Pezzo surfaces of degree 6.
In Appendix~\ref{sec:usage} we provide a brief introduction on the usage of the module \cite{Pe23-sage}.

\section{Flexibility criterion}\label{sec:flex-crit}
Here we present the flexibility criterion from \cite{Pe20}.
In this section we let $Y$ denote a normal projective variety, $H$ a very ample divisor on $Y$, and $X=\AffCone_H(Y)$ the affine cone over $Y$ corresponding to $H$. By $\pi\colon X\setminus\{0\}\to Y$ we denote the projectivizing map.

\begin{definition}[{\cite[Definition 3.1.7]{KPZ11}}]
An open subset $U\subset Y$ is called $H$-\emph{polar} if
the complement $Y\setminus U$ is the support of some effective $\QQ$-divisor equivalent to $H$ over $\QQ$.
\end{definition}

\begin{definition}[{\cite[Definition 2.3]{Pe20}}]
Let $\mathcal{U}=\{U\}$ be a collection of open affine subsets of $Y$.
\begin{enumerate}
    \item We say that a subset $Z$ of $Y$ is $\mathcal{U}$-\emph{invariant} if for any $U\in\mathcal{U}$ the intersection $U\cap Z$ is $\SAut(U)$-invariant. 
    \item We say that $\mathcal{U}$ is \emph{transversal} if $\bigcup_{U\in \mathcal{U}} U$ does not admit nontrivial $\mathcal{U}$-invariant subsets.
    \item We say that $\mathcal{U}$ is $H$-\emph{complete} if for any (not necessarily effective) $\QQ$-divisor equivalent to $H$ its support is not contained in the complement $Y\setminus\bigcup_{U\in\mathcal{U}} U$.
\end{enumerate}
\end{definition}

\begin{theorem}[{\cite[Theorem 2.4]{Pe20}}]\label{th:flex}
 Let $\mathcal{U}$ be a transversal collection of $H$-polar open affine subsets of a normal projective variety $Y$ for some very ample divisor $H$. 
 Then there exists an $\SAut(X)$-orbit $S$ on the corresponding affine cone $X$ whose image contains $\bigcup_{U\in\mathcal{U}}U$.
  If, moreover, $\mathcal{U}$ is $H$-complete, then $S$ is open in $X$ and contains the open subset $\pi^{-1}(\bigcup_{U\in\mathcal{U}}U)\subset X$.
\end{theorem}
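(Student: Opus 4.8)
The plan is to realize the special automorphisms of each $U\in\mathcal U$ as special automorphisms of $X$ by the Kishimoto--Prokhorov--Zaidenberg dehomogenization of locally nilpotent derivations, to glue the resulting orbit pieces using transversality, and to use $H$-completeness to sweep out the fibre direction of $\pi$. We may assume $Y$ irreducible, write $X=\Spec R$ with $R=\bigoplus_{n\ge0}H^{0}(Y,\O_{Y}(nH))$, and recall that $\G_{m}$ acts on $X$ with the origin as its only fixed point, normalizes $\SAut X$ (conjugation sends locally nilpotent derivations to locally nilpotent ones), and that $X\setminus\{0\}\xrightarrow{\ \pi\ }Y$ is the geometric quotient. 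For $U\in\mathcal U$ the $H$-polarity means $Y\setminus U=\Supp D_{U}$ for an effective $\QQ$-divisor $D_{U}\equiv H$; clearing denominators yields a section $\sigma_{U}\in R_{m_{U}}$ with $V(\sigma_{U})=Y\setminus U$, so $\pi^{-1}(U)$ is the principal open subset $X_{\sigma_{U}}=\Spec R[\sigma_{U}^{-1}]$, on which $\sigma_{U}$ is an invertible homogeneous function and $(R[\sigma_{U}^{-1}])_{0}=\O(U)$. Every locally nilpotent derivation $\partial$ of $\O(U)$ lifts, after multiplication by a high power of $\sigma_{U}$, to a homogeneous locally nilpotent derivation of $R$ whose flow fixes $\sigma_{U}$, preserves $\pi^{-1}(U)$, and descends along $\pi^{-1}(U)\to U$ to the $\G_{a}$-action of $\partial$ --- this is the dehomogenization construction of \cite{KPZ11}. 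Hence the group $N_{U}\subseteq\SAut X$ generated by these lifts preserves $\pi^{-1}(U)$, acts there ``horizontally'' with respect to the $\sigma_{U}$-trivialization, and satisfies $\pi(N_{U}\cdot x)=\SAut(U)\cdot\pi(x)$ for all $x\in\pi^{-1}(U)$.

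Set $W=\bigcup_{U}U$, $N=\langle N_{U}:U\in\mathcal U\rangle\subseteq\SAut X$, fix $x\in\pi^{-1}(W)$ and put $S=\SAut(X)\cdot x\supseteq N\cdot x$. Then $Z:=\pi(N\cdot x)\cap W$ is $\mathcal U$-invariant: if $y\in U\cap\pi(N\cdot x)$, pick $x'\in N\cdot x$ over $y$; then $x'\in\pi^{-1}(U)$, $N_{U}\cdot x'\subseteq N\cdot x$, so $\SAut(U)\cdot y=\pi(N_{U}\cdot x')\subseteq Z$, exhibiting $Z\cap U$ as a union of $\SAut(U)$-orbits. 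Since $\pi(x)\in Z$, transversality forces $Z=W$, whence $\pi(S)\supseteq\pi(N\cdot x)\supseteq W$; this is the first assertion. The same argument shows $\pi(S_{0})\supseteq W$ for \emph{every} $\SAut X$-orbit $S_{0}$ meeting $\pi^{-1}(W)$.

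For the second assertion, observe that for $y\in W$ and $x_{y}\in S\cap\pi^{-1}(y)$ the set $\Gamma_{y}:=\{\lambda\in\G_{m}:\lambda x_{y}\in S\}$ is a subgroup of $\G_{m}$ (using that $\G_{m}$ normalizes $\SAut X$), hence finite or all of $\G_{m}$, and $\Gamma_{y}=\G_{m}$ exactly when $\pi^{-1}(y)\subseteq S$. So it suffices to produce a positive-dimensional subset of $\Gamma_{y}$ for generic $y\in W$: then $\Gamma_{y}$ is cofinite, hence equals $\G_{m}$, so $S\supseteq\pi^{-1}(W')$ for a dense open $W'\subseteq W$; being open and $n$-dimensional over $W'$, $S$ is open, whence $S\cap\pi^{-1}(y)$ is cofinite in $\pi^{-1}(y)$ for \emph{every} $y\in W$, so $\Gamma_{y}=\G_{m}$ throughout and $S\supseteq\pi^{-1}(W)$; openness also gives that any point of $S$ lies in an open set $g(\pi^{-1}(W))\subseteq S$. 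This is where $H$-completeness enters. Over an overlap $U\cap U'$ the two trivializations differ by $g_{U,U'}=\sigma_{U}^{m_{U'}}\sigma_{U'}^{-m_{U}}\in\K(Y)$, regular and non-vanishing there; were all $g_{U,U'}$ constant, then --- the nerve of $\mathcal U$ being connected by transversality (a disconnected nerve yields a nontrivial $\mathcal U$-invariant subset) --- the effective $\QQ$-divisors $\frac1{m_{U}}\operatorname{div}(\sigma_{U})$ would all coincide with one $D_{0}\equiv H$, forcing $\Supp D_{0}=Y\setminus U$ to be independent of $U$, hence $\mathcal U=\{U_{0}\}$ and $\Supp D_{0}=Y\setminus W$, contradicting $H$-completeness. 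So some $g_{U,U'}$ is non-constant, and transporting $x_{y}$ for a generic $y\in U\cap U'$ along the horizontal foliations of $N_{U}$ and $N_{U'}$ and back, returning its $\pi$-image to $y$ by means of $\pi(S)\supseteq W$ and the $\SAut(U)$-, $\SAut(U')$-actions, multiplies its $\G_{m}$-coordinate by a positive-dimensional family of values built from $g_{U,U'}$, supplying the required subset of $\Gamma_{y}$.

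The routine steps are the lifting in the first paragraph (the dehomogenization of \cite{KPZ11}, whose only delicate point is local nilpotency of the lift when $\partial$ is not just the translation along a cylinder axis) and the subgroup/openness bookkeeping in the third. The main obstacle is the last manipulation: realizing within the orbit $S$ a positive-dimensional family of motions of $x_{y}$ in the fibre $\pi^{-1}(y)$ --- concretely, choosing, for generic $y$ in an overlap, a positive-dimensional family of base points whose $\SAut(U)$- (resp.\ $\SAut(U')$-) orbits return to $y$ while $g_{U,U'}$ separates them, possibly via longer chains of overlapping members of $\mathcal U$ when individual $\SAut(U)$-orbits are small. That is precisely where the divisorial hypothesis ``$H$-complete'' has to be converted into a statement about orbits, and where the real work lies.
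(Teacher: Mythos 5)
First, note that the paper you are working from does not prove Theorem~\ref{th:flex} at all: it is imported verbatim from \cite[Theorem 2.4]{Pe20}, so there is no in-paper proof to compare against; I am therefore judging your argument on its own merits against the standard strategy. Your first two paragraphs (KPZ dehomogenization of locally nilpotent derivations of $\O(U)$ over the principal open set $X_{\sigma_U}$, the flows fixing $\sigma_U$, and the transversality argument showing that $\pi(N\cdot x)\cap W$ is a nonempty $\mathcal{U}$-invariant subset, hence all of $W$) are indeed the standard route to the first assertion and are essentially correct, modulo the homogenization subtleties you rightly delegate to \cite{KPZ11}. The $\G_m$-bookkeeping at the start of the third paragraph (that $\{\lambda:\lambda x_y\in S\}=\{\lambda:\lambda S=S\}$ is a subgroup, independent of the point, so it suffices to make it infinite) is also fine.

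The genuine gap is exactly where you place the ``real work,'' and it is not a routine completion. Concretely: what you actually extract from $H$-completeness --- that some transition function $g_{U,U'}=\sigma_U^{m_{U'}}\sigma_{U'}^{-m_U}$ is non-constant --- does not use $H$-completeness at all: any two \emph{distinct} $H$-polar members $U\neq U'$ have distinct complements, hence $\frac{1}{m_U}\operatorname{div}(\sigma_U)\neq\frac{1}{m_{U'}}\operatorname{div}(\sigma_{U'})$ are distinct effective $\QQ$-divisors in the class of $H$, so their ratio is automatically non-constant. Thus, if your out-and-back scheme could be completed from a non-constant $g_{U,U'}$ alone, you would have proved openness of $S$ whenever $\mathcal{U}$ has two distinct members, with no completeness hypothesis --- which is not what the theorem asserts; moreover the hypothesis is formulated for \emph{not necessarily effective} $\QQ$-divisors equivalent to $H$, a strength your argument never touches, a strong sign that the intended proof consumes it in a different (and essential) way. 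Independently of this, the out-and-back move itself is unjustified: you need a positive-dimensional family of intermediate points $y_1$ that are reachable from $y$ inside level sets of $\sigma_U$ \emph{and} returnable to $y$ inside level sets of $\sigma_{U'}$ (or along longer chains), with $g_{U,U'}$ non-constant along precisely that family; neither the returnability (individual $\SAut(U)$-orbits may be small or even trivial --- transversality only controls the collective action) nor the non-constancy along the relevant orbit directions is established, and the scalars you pick up on the return legs are not controlled. So the key implication ``$H$-complete $\Rightarrow$ $\Gamma$ infinite $\Rightarrow$ $S\supseteq\pi^{-1}(W)$ open'' remains unproven; as it stands your text proves only the first assertion.
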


We operate with collections of cylinders, which are defined as follows.
\begin{definition}[{\cite[Definition 3.1.5]{KPZ11}}]
  An open subset $U\subset Y$ is called a \emph{cylinder} if
  $U\cong \AA^1\times Z$ for some smooth variety $Z$ with $\Pic(Z)=0$. 
  By \emph{fibers} of $U$ we mean $\AA^1$-fibers of the projection to $Z$. 
  There is a natural $\G_a$-action on $U$, whose orbits are fibers of $U$.
\end{definition}

\section{Bubble cycles}\label{sec:bubble-cycles}
The notation of this section is based on \cite[Section~7]{Dolg-CAG}.
Let $X$ be a smooth projective surface. We consider the category $\mathcal{B}_X$ induced by the birational morphisms $\phi_Y\colon Y\to X$ of smooth projective surfaces.  Objects are surfaces $Y$, and morphisms are birational morphisms over $X$. That is, $\mathrm{Mor}(Y,Y')$ consists of at most one element, the composition $\phi_{Y'}^{-1}\circ \phi_Y$, provided that it is a morphism. We say that $Y$ \emph{dominates} $Y'$ in this case.

A surface $Y\in\mathcal{B}_X$ defines a subcategory $\mathcal{B}_Y$.
Each such surface $Y$ is obtained as an \emph{iterative blowup} of $X$, 
$$Y\stackrel{\sigma_1}{\to} Y_1\stackrel{\sigma_2}{\to}\ldots\stackrel{\sigma_n}{\to} Y_n=X,$$
 where each $\sigma_i$ is a contraction of a (-1)-curve into a smooth point and $\phi_Y=\sigma_n\circ\cdots\circ\sigma_1$.

We introduce the following extension of the Picard group $\Pic X$ to iterative blowups.
 \begin{definition}
   The \emph{bubble Picard group} of $X$ is the direct limit
   \begin{equation}\label{eq:pic-bb}
    \Pic\mathcal{B}_X=\varinjlim_{Y\in\mathcal{B}_X}\Pic Y 
   \end{equation}
  under injective maps $\sigma^*:\Pic Y \to \Pic Y'$ for  $Y'\stackrel{\sigma}{\to}Y\to X.$  Since maps $\sigma^*$ preserve intersection forms, $\Pic\mathcal{B}_X$ is also endowed with the intersection form.
 \end{definition}

The \emph{bubble space} $X^{bb}$ of $X$ is the set of the closed points of all interative blowups of $X$ modulo natural equivalence relations:
$$ 
X^{bb} = \sqcup_{Y\in\mathcal{B}_X} Y /\{p_1\sim p_2\},
$$
where $p_1\sim p_2$ if there exist neighbourhoods
$p_i\in U_i\subset Y_i$ such that
$\sigma_{Y_2}^{-1}\circ\sigma_{Y_1}:U_1\cong U_2$, see \cite[Definition~7.3.1]{Dolg-CAG}. It is a coproduct in the category of sets.
We say that a point $q$ is \emph{infinitely near} $p$, where $p,q\in X^{bb}$, and write $q\succ p$, if $q\neq p$ and $\sigma(q)=p$ for some morphism $\sigma$ in $\mathcal{B}_X$. Let $k$ be the minimal number of blowups in $p$ and infinitely near $p$ that is needed to obtain $q$. Then we say that $q$ is of \emph{order} $k$ over $p$ and write $q\succ_k p$.

The following definition is based on \cite[cf. Definition~7.3.2]{Dolg-CAG}, except for an additional positivity condition, which we omit.
\begin{definition}
  A \emph{bubble cycle} on  $X$ is a finite formal sum $\eta=\sum_{p\in X^{bb}} \eta(p) p\in\ZZ^{X^{bb}}$ of points of $X^{bb}$ with integer coefficients. Equivalently, $\eta$ is a map $X^{bb}\to\ZZ$ with a finite support.
We let the \emph{support} of a bubble cycle $\eta$, denoted by $\supp\eta$, be the set of points $p\in X^{bb}$ such that $\eta(p)\neq0$ or $\eta(q)\neq0$ for some $q\succ p$. %
We denote the group of bubble cycles on $X^{bb}$ by $\mathcal{Z}(X^{bb})$.%
\end{definition}

\begin{definition}
  A bubble cycle $\eta$ is called a \emph{blowup cycle} if %
  we have $\eta(p)=1$ for any $p\in\supp\eta$.
\end{definition}

\begin{convention}
  Each bubble cycle $\eta$ on $Y\in\mathcal{B}_X$ %
  induces an iterative blowup $\sigma\colon Y_\eta\to Y$ by first blowing up points $p\in\supp\eta$ that are contained in $Y$, then infinitely near ones of order 1, 
   etc. The map $\sigma$ is called the \emph{resolution} of $\eta$. Note that two bubble cycles $\eta,\eta'\in \mathcal{Z}(Y^{bb})\subset  \mathcal{Z}(X^{bb})$ with the same support generate the same surface  $X_\eta=X_{\eta'}$.  

  Sending $\eta$ to $Y_\eta$, we obtain a natural bijection between blowup cycles $\eta$ on $Y$ and surfaces $Y'\in\mathcal{B}_X$ that dominate $Y$. 
\end{convention}

\begin{convention}\label{cn:bb-pic}
  Consider a point $p\in X^{bb}$ that belongs to a surface $Y\in \mathcal{B}_X$. Let $\sigma:\mathrm{Bl}_pY\to Y$ be the blowup of $Y$ at $p$ and $\mathcal{E}_p$ be the divisor class  of the exceptional curve $E$ of $\sigma$. Then we identify $p$ with $\mathcal{E}_p$.

  By additivity this defines a natural embedding  $\mathcal{Z}(X^{bb})\embed \Pic\mathcal{B}_X$. In particular, a bubble cycle $\eta$ is sent to the Picard group $\Pic X_\eta$ of the corresponding iterative blowup.

Note that for any $Y'$ dominating $\mathrm{Bl}_pY$ as above the divisor class $\mathcal{E}_p\in\Pic(Y')$ contains the only effective divisor $D_p$. Given an iterative blowup $\sigma\colon Y_\eta\to Y$ corresponding to a bubble cycle $\eta$ on $Y$, we call the divisors $D_p$ for $p\in\supp\eta$  the \emph{exceptional configurations} of the birational morphism $\sigma$, see \cite[Section~7.3.1]{Dolg-CAG}.
\end{convention}

In fact, the bubble Picard group  combines the concepts of a bubble cycle and a divisor class, as the following lemma shows.
\begin{lemma}
  \begin{enumerate}
    \item We have
    $$\Pic\mathcal{B}_X= \Pic X \oplus \mathcal{Z}(X^{bb}),$$
    where $\mathcal{Z}(X^{bb})$ is considered as a subgroup of $\Pic\mathcal{B}_X$, following Convention~\ref{cn:bb-pic}.
    \item The groups $\Pic \mathcal{B}_X$ and $\Pic \mathcal{B}_Y$ coincide for $Y\in\mathcal{B}_X$.
    \item We have 
    $$\Pic\mathcal{B}_X= \Pic Y \oplus \mathcal{Z}(Y^{bb})$$
    for any $Y\in\mathcal{B}_X.$
  \end{enumerate}
\end{lemma}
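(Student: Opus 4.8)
The plan is to reduce all three statements to the elementary behaviour of the Picard group under a single blowup, combined with the fact that $\mathcal{B}_X$ is a filtered category. The structural observation underpinning everything is that any two surfaces $Y',Y''\in\mathcal{B}_X$ are dominated by a common $W\in\mathcal{B}_X$: if $Y'=X_{\eta'}$ and $Y''=X_{\eta''}$ for blowup cycles $\eta',\eta''$ on $X$, then $\supp\eta'\cup\supp\eta''\subset X^{bb}$ is finite and closed downward under $\succ$ (each $\supp\eta$ is, by the definition of the support), hence is the support of a blowup cycle $\eta$; ordering the blowups of $\eta$ so that $\supp\eta'$ comes first exhibits $W=X_\eta$ as a surface dominating $Y'$, and symmetrically for $Y''$.

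For (i), this says the direct system defining $\Pic\mathcal{B}_X$ is filtered with injective transition maps $\sigma^*$, so the canonical maps $\Pic Y'\to\Pic\mathcal{B}_X$ are injective and $\Pic\mathcal{B}_X=\bigcup_{Y'\in\mathcal{B}_X}\Pic Y'$ as subgroups. Iterating $\Pic(\mathrm{Bl}_pZ)=\sigma^*\Pic Z\oplus\ZZ\mathcal{E}_p$ along the resolution $\sigma\colon X_\eta\to X$ of a blowup cycle $\eta$ gives
\[
  \Pic X_\eta=\sigma^*\Pic X\oplus\bigoplus_{p\in\supp\eta}\ZZ\mathcal{E}_p,
\]
and by Convention~\ref{cn:bb-pic} the second summand is precisely the subgroup of $\mathcal{Z}(X^{bb})$ spanned by the points of $\supp\eta$. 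Since every finite subset of $X^{bb}$ lies inside $\supp\eta$ for a suitable blowup cycle $\eta$ (its downward closure under $\succ$ is again finite), passing to the union over all $\eta$ yields $\Pic\mathcal{B}_X=\Pic X+\mathcal{Z}(X^{bb})$; directness follows since an element lying in both subgroups lies, inside one $\Pic X_\eta$ with $\supp\eta$ large enough, in both complementary summands $\sigma^*\Pic X$ and $\bigoplus_{p\in\supp\eta}\ZZ\mathcal{E}_p$, hence is $0$.

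For (ii), since $Y\in\mathcal{B}_X$ the subcategory $\mathcal{B}_Y$ of surfaces dominating $Y$ is cofinal in $\mathcal{B}_X$: given $Z\in\mathcal{B}_X$, the common dominating surface $W$ of $Z$ and $Y$ produced above lies in $\mathcal{B}_Y$ and dominates $Z$. A cofinal subsystem computes the same colimit, so $\Pic\mathcal{B}_Y=\varinjlim_{Z\in\mathcal{B}_Y}\Pic Z=\varinjlim_{Z\in\mathcal{B}_X}\Pic Z=\Pic\mathcal{B}_X$, compatibly with the canonical maps. Then (iii) is immediate: $Y$ is a smooth projective surface, so (i) applied to $Y$ gives $\Pic\mathcal{B}_Y=\Pic Y\oplus\mathcal{Z}(Y^{bb})$, and (ii) identifies the left side with $\Pic\mathcal{B}_X$; one only notes that the embedding $\mathcal{Z}(Y^{bb})\embed\Pic\mathcal{B}_Y$ of Convention~\ref{cn:bb-pic} is compatible with the inclusion $\mathcal{B}_Y\subset\mathcal{B}_X$ because each class $\mathcal{E}_p$ is defined intrinsically by the blowup at $p$.

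I do not expect a serious obstacle. The only points requiring care are the bookkeeping that supports of blowup cycles are finite and closed downward under $\succ$ — so that unions of supports are again supports, which is exactly what drives both the filteredness in (i) and the cofinality in (ii) — and the verification that the one-step decomposition $\Pic(\mathrm{Bl}_pZ)=\sigma^*\Pic Z\oplus\ZZ\mathcal{E}_p$ iterates to the displayed direct-sum form compatibly with the identification $p\leftrightarrow\mathcal{E}_p$ of Convention~\ref{cn:bb-pic}.
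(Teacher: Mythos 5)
Your proof is correct and takes essentially the same route as the paper, which simply asserts that (i) follows from the definition of the direct limit, (ii) from the coincidence of the limits (your cofinality argument), and (iii) from the first two; you have merely spelled out the filteredness, the iterated one-step decomposition $\Pic(\mathrm{Bl}_pZ)=\sigma^*\Pic Z\oplus\ZZ\mathcal{E}_p$, and the directness check that the paper leaves implicit.
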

\begin{proof}
  Assertion (i)  directly follows from the definition of $\Pic\mathcal{B}_X$ as the inductive limit of Picard groups under blowups of points.
  Assertion (ii) is trivial, since the corresponding direct limits \eqref{eq:pic-bb} coincide.
  Assertion (iii) follows from the previous two.
\end{proof}

This observation allows us to treat elements of $\Pic \mathcal{B}_X$ as pairs, see the following definition.
\begin{definition}
  A \emph{bubble class} $(D,\eta)$ on $Y\in\mathcal{B}_X$ is a pair of a divisor class $D\in\Pic Y$ and a bubble cycle $\eta$ on $Y^{bb}\subset X^{bb}$.
\end{definition}
  The following lemma shows how the bubble class transforms under blowups as a pair.
\begin{lemma}\label{lm:bubble-class-transform}
  \begin{enumerate}
    \item Consider a bubble class $(D,\eta)$ on $Y\in\mathcal{B}_X$ and its representation $(D',\eta')$ on $Y'$ for a blowup $\sigma\colon Y'\to Y$ at a point $p$ with the exceptional curve $E$. Then $D'=D+kE$ and $\eta'=\eta|_{{Y'}^{bb}}=\eta-kp$, where $k=\eta(p)=D'\cdot E$.
    \item Consider a bubble class $(D,\eta)$ on $Y\in\mathcal{B}_X$ and a iterative blowup $\sigma:Y'\to Y$ that blows up all points in $\supp\eta$. Then  we have
    $$(D,\eta)=\left(D+\sum_{p\in\supp\eta} \eta(p)\mathcal{E}_p,0\right)=D+\sum_{p\in\supp\eta} \eta(p)\mathcal{E}_p$$ 
    on $Y'$, where $\mathcal{E}_p$ is the divisor class corresponding to $p$ on $Y'$.
  \end{enumerate}
\end{lemma}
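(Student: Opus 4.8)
The plan is to read off both assertions from the uniqueness of the splitting $\Pic\mathcal{B}_X=\Pic Y'\oplus\mathcal{Z}({Y'}^{bb})$ supplied by the previous lemma, applied to the \emph{target} surface $Y'$; assertion (ii) then follows by iterating (i) over the points of $\supp\eta$.

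For (i) I would first record the set-theoretic fact that, since $Y'=\mathrm{Bl}_pY$, one has ${Y'}^{bb}=Y^{bb}\setminus\{p\}$: the point $p$ is replaced by the curve $E$ and so leaves the bubble space, while every other closed point of every blowup of $Y$ still lifts to a closed point on a suitable blowup dominating $Y'$ (a point $r\neq p$ of $Y$ is already a point of $Y'$, and a point infinitely near $p$, or infinitely near some other point of $Y$, is realized on a blowup over $Y'$). Under Convention~\ref{cn:bb-pic} the bubble class is the element
\[
(D,\eta)=D+\sum_{q\in\supp\eta}\eta(q)\,\mathcal{E}_q\in\Pic\mathcal{B}_X ,
\]
with $D$ viewed in $\Pic\mathcal{B}_X$ via $\sigma^*$. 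I would split off the summand $q=p$: since $Y'=\mathrm{Bl}_pY$ we have $\mathcal{E}_p=[E]\in\Pic Y'$, while every remaining class $\mathcal{E}_q$ with $q\neq p$ lies in $\mathcal{Z}({Y'}^{bb})$ by the previous sentence. Hence
\[
(D,\eta)=\bigl(\sigma^*D+\eta(p)[E]\bigr)+\sum_{q\in\supp\eta,\ q\neq p}\eta(q)\,\mathcal{E}_q ,
\]
with the first summand in $\Pic Y'$ and the second in $\mathcal{Z}({Y'}^{bb})$. By uniqueness of the $Y'$-splitting this is exactly the pair $(D',\eta')$, so $D'=D+kE$ and $\eta'=\eta-kp=\eta|_{{Y'}^{bb}}$ with $k=\eta(p)$; intersecting $D'=D+kE$ with $E$ and using $\sigma^*D\cdot E=0$ together with $E^2=-1$ then recovers $k$ from $D'\cdot E$.

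For (ii) I would fix a linear extension $p_1,\dots,p_n$ of the partial order $\succ$ on the finite set $\supp\eta$, which is legitimate because $\supp\eta$ is downward closed under $\succ$ (if $q\in\supp\eta$ and $q\succ p$, then $p\in\supp\eta$), and blow the points up one at a time: $Y_0=Y$ and $Y_i=\mathrm{Bl}_{p_i}Y_{i-1}$. At step $i$ all points of $\supp\eta$ lying below $p_i$ have already been blown up, so $p_i$ is a closed point of $Y_{i-1}$ and (i) applies, taking the bubble class $(D_{i-1},\eta_{i-1})$ on $Y_{i-1}$ to $(D_i,\eta_i)$ on $Y_i$ with $D_i=D_{i-1}+\eta_{i-1}(p_i)\mathcal{E}_{p_i}$ and $\eta_i=\eta_{i-1}-\eta_{i-1}(p_i)p_i$. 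Since the $p_j$ are pairwise distinct, $\eta_{i-1}(p_i)=\eta(p_i)$ for every $i$; summing over $i$ gives $\eta_n=\eta-\sum_i\eta(p_i)p_i=0$ and $D_n=D+\sum_{p\in\supp\eta}\eta(p)\mathcal{E}_p$ on $Y_n=Y_\eta$. For any larger blowup $Y'\to Y_\eta$ one pulls back, which leaves the divisorial term as asserted and keeps the bubble part zero.

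The main obstacle I foresee is the bookkeeping underlying (i): pinning down ${Y'}^{bb}=Y^{bb}\setminus\{p\}$, hence that the residual bubble cycle $\eta-kp$ really is supported on ${Y'}^{bb}$, so that the regrouped sum displayed above is a genuine $Y'$-decomposition and uniqueness can be invoked. Everything else is formal manipulation inside $\Pic\mathcal{B}_X$ together with the one-line intersection computation that recovers $k$.
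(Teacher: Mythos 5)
Your proposal is correct and follows essentially the same route as the paper, whose proof is just a terse version of yours: it notes $\sigma^*(D)=D$ under the identification $\Pic Y\subset\Pic Y'\subset\Pic\mathcal{B}_X$, declares assertion (i) straightforward from the splitting, and obtains (ii) by induction on the number of blowups, exactly as in your point-by-point iteration. One small caveat: your own intersection computation gives $D'\cdot E=\eta(p)E^2=-k$, so $k$ is recovered as $-D'\cdot E$ rather than $D'\cdot E$ as the statement literally asserts; this sign discrepancy lies in the statement (or its intersection-form convention), not in your argument.
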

\begin{proof}
  Note that $\sigma^*(D)=D$, since  $\Pic Y\subset \Pic Y'$ as subgroups of $\Pic\mathcal{B}_X$ by our convention. Then the first assertion is straightforward. The second one is obtained by induction on the number of blowups.
\end{proof}

Each bubble class also defines a linear system on $X$ with a prescribed base scheme, which motivates the following definition.

\begin{definition}
  A \emph{bubble linear system} corresponding to the bubble class $(D,-\eta)$ on $Y\in\mathcal{B}_X$ %
  is the linear system $|D-\eta|$ in notation of \cite[Section~7.3.2]{Dolg-CAG}. %
  
  That is, $|D-\eta|$ consists of effective divisors $D'\in|D|$ that (up to taking a proper transform) have multiplicity at least 
  $\eta(p)$ at $p$ for each $p\in\supp\eta$.
\end{definition}

\begin{remark}
A divisor  $D'\in|D|$ on $Y$ belongs to $|D-\eta|$ if and only if its proper transform $\tilde D$ on $Y_\eta$ satisfies $\tilde D\cdot \mathcal{E}_p\ge\eta(p)$ for any $p\in\supp\eta$.
\end{remark}

Bubble linear systems on $Y\in\mathcal{B}_X$ are exactly complete linear systems on surfaces that dominate $Y$, see the following proposition. 
\begin{proposition}
  Let $L=|D-\eta|$ be a bubble linear system on $Y\in\mathcal{B}_X$ and $\sigma_\eta\colon Y_\eta\to Y$ be a resolution of $\eta$. Then $\sigma_\eta^*(L)$ is the complete linear system $|D'|$, where $D'\in\Pic Y_\eta$ equals $(D,-\eta)$.
  
  Vice versa, let $Y,Y'\in\mathcal{B}_X$ be such that $Y'$ dominates $Y$ and let  $D'\in\Pic Y'$. Then the image of the complete linear system $|D'|$ equals $|D-\eta|$ for the bubble class $(D,-\eta)$ on $Y$ such that $D'=(D,-\eta)$.
\end{proposition}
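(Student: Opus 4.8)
The plan is to prove the two directions separately, using Lemma~\ref{lm:bubble-class-transform}(ii) to reduce everything to the case where $\eta$ has been fully resolved. For the first assertion, let $\sigma_\eta\colon Y_\eta\to Y$ be the resolution of $\eta$ and set $D'=(D,-\eta)\in\Pic Y_\eta$, which by Lemma~\ref{lm:bubble-class-transform}(ii) equals $D-\sum_{p\in\supp\eta}\eta(p)\mathcal{E}_p$. First I would recall that by definition a divisor $D''\in|D|$ lies in $|D-\eta|$ precisely when its proper transform $\tilde D''$ on $Y_\eta$ satisfies $\tilde D''\cdot\mathcal{E}_p\ge\eta(p)$ for all $p\in\supp\eta$ (the Remark preceding the statement). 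The key computation is then that the proper transform $\tilde D''$ and the total transform $\sigma_\eta^*(D'')$ differ by an effective combination of the exceptional configurations $D_p$, and that imposing multiplicity $\eta(p)$ at each $p$ is exactly the condition that $\sigma_\eta^*(D'')-\sum_p\eta(p)D_p$ is still effective; this last divisor is a member of $|D'|$ since $D_p$ represents $\mathcal{E}_p$ in $\Pic Y_\eta$. Conversely, every member of $|D'|$ is of this form because $\mathcal{E}_p$ contains the unique effective divisor $D_p$ (Convention~\ref{cn:bb-pic}), so any effective divisor linearly equivalent to $D'=D-\sum\eta(p)\mathcal{E}_p$ must contain each $D_p$ with multiplicity at least $\eta(p)$ and hence pushes forward to a member of $|D-\eta|$. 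This gives a bijection $|D-\eta|\leftrightarrow|D'|$ compatible with the linear structure, i.e. $\sigma_\eta^*(L)=|D'|$.

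For the second assertion, suppose $Y'$ dominates $Y$ and $D'\in\Pic Y'$ is arbitrary. Using the bijection between blowup cycles on $Y$ and surfaces dominating $Y$ (stated in the Convention after the blowup-cycle definition), write $Y'=Y_{\eta_0}$ for the blowup cycle $\eta_0$ with $\supp\eta_0$ the set of points blown up by $\sigma\colon Y'\to Y$. Then decompose $D'$ according to the direct sum $\Pic\mathcal{B}_X=\Pic Y\oplus\mathcal{Z}(Y^{bb})$ from the preceding lemma: this writes $D'=D-\eta$ uniquely with $D\in\Pic Y$ and $\eta$ a bubble cycle supported on $\supp\eta_0\subset Y^{bb}$, i.e. $D'$ is the bubble class $(D,-\eta)$ on $Y$. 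Now I would observe that $\sigma$ factors through the resolution $\sigma_\eta\colon Y_\eta\to Y$ of $\eta$ (since $\supp\eta\subseteq\supp\eta_0$), and the image of $|D'|$ under $\sigma$ equals the image under $\sigma_\eta$ of the pullback of $|D'|$ along the intermediate contraction $Y'\to Y_\eta$; the latter pullback is the complete linear system of $(D,-\eta)$ on $Y_\eta$ because the extra exceptional classes of $Y'\to Y_\eta$ not in $\supp\eta$ contribute trivially. Applying the first part of the proposition to $Y_\eta$ then identifies this image with $|D-\eta|$, which is the claim.

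The main obstacle I expect is the bookkeeping in the first direction: carefully distinguishing proper transform from total transform and checking that "multiplicity $\ge\eta(p)$ at $p$" (a condition that must be interpreted iteratively, on proper transforms at each stage of the iterative blowup) is equivalent to "$\sigma_\eta^*(D'')-\sum_p\eta(p)D_p$ effective." This requires an induction on the length of the iterative blowup $\sigma_\eta$, handling one point at a time and using Lemma~\ref{lm:bubble-class-transform}(i) at each step to track how the pair $(D,\eta)$ and the multiplicity conditions transform; the infinitely-near points are where one must be most careful, since the relevant multiplicity of the proper transform at an order-$k$ point depends on the multiplicities already subtracted at the points it lies over. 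Once this equivalence is established, the rest is a formal unwinding of the definitions and the direct-sum decomposition.
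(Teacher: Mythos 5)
Your overall route coincides with the paper's: the paper proves the proposition in one line by citing Lemma~\ref{lm:bubble-class-transform} — $|D-\eta|$ is the direct image of the complete system $|\sigma^*(D)-\sum_{p}\eta(p)\mathcal{E}_p|$ on $Y_\eta$, with the converse declared clear — and your proposal is an unpacking of exactly this reduction, including the (correct) caveat that the multiplicity conditions at infinitely near points must be handled iteratively along the blowup.

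There is, however, one step that fails as written. In the converse inclusion of the first assertion you claim that \emph{any effective divisor linearly equivalent to} $D'=D-\sum_p\eta(p)\mathcal{E}_p$ \emph{must contain each $D_p$ with multiplicity at least $\eta(p)$}. This is false: for a single blowup $\sigma\colon\mathrm{Bl}_p\PP^2\to\PP^2$ with $D=L$ and $\eta=p$, the members of $|L-\mathcal{E}_p|$ are the proper transforms of lines through $p$, which contain the exceptional curve with multiplicity $0$. What you need (and what is true) is that the \emph{pushforward} of a member $G\in|D'|$ has multiplicity at least $\eta(p)$ at each $p$: write $G=\widetilde{\sigma_*G}+F$ with $F\ge0$ supported on the exceptional locus; since divisors supported on the exceptional locus are determined by their classes (the $\mathcal{E}_p$ are linearly independent in $\Pic Y_\eta$), comparing classes gives $F=\sum_p\bigl(m_p-\eta(p)\bigr)D_p$, where $m_p$ is the iterated multiplicity of $\sigma_*G$ at $p$; hence $m_p\ge\eta(p)$, while the coefficient of $D_p$ in $G$ itself is only $m_p-\eta(p)\ge0$. (Had your claim been literally true, it would force $m_p\ge2\eta(p)$, which is too strong.) With this local repair the rest of your argument — the equivalence of the multiplicity conditions with effectivity of $\sigma^*(D'')-\sum_p\eta(p)D_p$, and the reduction of the second assertion to the first via the decomposition $\Pic\mathcal{B}_X=\Pic Y\oplus\mathcal{Z}(Y^{bb})$ and the factorization of $Y'\to Y$ through $Y_\eta$ — is sound and supplies considerably more detail than the paper's own proof.
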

\begin{proof}
  Let $\sigma:Y_\eta\to\eta$ be an iterative blowup that blows up all points in $\supp\eta$. 
  Then $|D-\eta|$ equals the direct image of $|\sigma^*(D)-\sum_{p\in\supp\eta} \eta(p)\mathcal{E}_p|$, cf. Lemma~\ref{lm:bubble-class-transform}.
     The converse statement is clear.
\end{proof}

\section{Bubble linear systems on $\PP^2$}
\label{sec:bubble-systems}
In this section we let $X=\PP^2$ and restrict to the full subcategory $\mathcal{B}_{dP}\subset\mathcal{B}_{\PP^2}$ of del Pezzo and weak del Pezzo surfaces (except $\PP^1\times\PP^1$).  A \emph{negative curve} on $Y\in\mathcal{B}_{dP}$ is a curve $C\subset Y$ isomorphic to $\PP^1$ with a negative self-intersection index $C^2$. If $Y$ is a del Pezzo surface, then $C^2=-1$, otherwise $C^2$ equals either $-1$ or $-2$.
The notation of this section is based on \cite[Section~8]{Dolg-CAG}.

We consider surfaces in $\mathcal{B}_{dP}$ up to the following combinatorial equivalence, cf. Weyl equivalence in \cite{Lubbes}.

\begin{definition}
  We say that $Y,Y'\in\mathcal{B}_{dP}$ are \emph{equivalent} if there is an isomorphism $\Pic Y\isom\Pic Y'$ respecting the intersection form that induces a bijection of classes of negative curves.
  The equivalence class of surfaces in $\mathcal{B}_{dP}$ is called the \emph{type} of surfaces. It is defined by $\Pic Y$ as a lattice with an intersection form and the subset of classes of negative curves.
  We define the \emph{type} of a blowup cycle $\eta$ on $\PP^2$ as one of $X_\eta$. 
\end{definition}
\begin{remark}
  The subset of $(-2)$-curves on $Y$ defines singularities types on the singular del Pezzo surface $Y'$, which is a (pluri)anticanonical model of $Y$, see \cite[Section~8.1]{Dolg-CAG}. 
\end{remark}

Consider a blowup cycle $\eta$ on $\PP^2$.
By \cite[Corollary~8.1.17]{Dolg-CAG}, the surface $X_\eta$ belongs to $\mathcal{B}_{dP}$ if and only if the following conditions on $\supp\eta$ hold.
\begin{enumerate}
  \item $|\supp\eta|\le8$;
  \item for each $p\in\supp\eta$ there is at most one $q\in\supp\eta$ such that $q\succ_1p$;
  \item no four points lie on a line, i.e., $|L-p_1-\ldots-p_4|=\emptyset$ for any $p_1,\ldots,p_4\in\supp\eta$.
  \item no seven points lie on a conic, i.e.,
  $|2L-p_1-\ldots-p_7|=\emptyset$ for any $p_1,\ldots,p_7\in\supp\eta$.
\end{enumerate}

\begin{convention}
  The type of a blowup cycle $\eta$ for $X_\eta\in\mathcal{B}_{dP}$ is defined by the following possible dependencies:
\begin{itemize}
  \item $p\prec q$ for $p,q\in\supp\eta$;
  \item three points in $\supp\eta$ lie on a line, i.e., $|L-p_1-p_2-p_3|$ is nonempty for some $p_1,p_2,p_3\in\supp\eta$;
  \item five points in  $\supp\eta$ lie on a conic, i.e., $|2L-p_1-\ldots-p_5|$ is nonempty for some $p_1,\ldots,p_5\in\supp\eta$;
  \item all eight points in $\supp\eta$ lie on a cubic, one of them is a node, i.e., $|3L-2p_1-p_2-\ldots-p_8|\neq\emptyset$ for some enumeration choice of $\{p_1,\ldots,p_8\}=\supp\eta$.
\end{itemize}
The set of such dependencies corresponding to $\eta$ we denote by $\text{type}(\eta)$.
\end{convention}

A cylinder $U\subset\mathrm{Bl}_\eta\PP^2$ is described by the following data:
\begin{enumerate}
  \item the pencil of curves $P$ induced by the $\AA^1$-fibration of $U$, which is a 1-dimensional linear subsystem in $|D-\eta'|$ with the base scheme corresponding to $\eta'$ and rational irreducible general fibers that contain $\AA^1$ outside of the only base point or fixed component;
  \item fibers of $P$ in the complement of $U$, including all reducible ones, which are again described in terms of bubble classes.
\end{enumerate}
   So, the cylinder $U$ can be expressed in terms of the combinatorial type of $\eta+\eta'$ and similar data for components of  reducible fibers. %
   Our aim is to extend cylinder constructions to (weak) del Pezzo surfaces of various types in a uniform and algorithmic way.

\section{Picard group and polyhedral cones}\label{sec:cones}

For the rest of this paper we use the following notation.
We denote by 
\begin{itemize}
    \item $Y$ a del Pezzo surface of degree $d$, where $d\in\{1,\ldots,5\}$;
    \item  $\sigma$ a contraction $Y\to\PP^2$;
    \item  $E_1,\ldots,E_m$ the (-1)-curves contracted by $\sigma$ into $p_1,\ldots,p_m\in\PP^2$ respectively, where $m=9-d$; 
    \item  $L_{i,j}$ a line passing through $p_i, p_j$ for given $i,j\in\{1,\ldots,m\}$, $i\neq j$, as well as its proper transform in $Y$;%
    \item  $L$ the class of the proper transform of a general line on $\PP^2$;
    \item  $-K$ the anticanonical divisor class on $Y$;
    \item  $\Eff(Y)$ the Mori cone of numerically effective $\RR$-divisors in $\Pic(Y)\otimes\RR$; it is generated by all the (-1)-curves in $Y$;
    \item  $\Ample(Y)$ the ample cone; it contains all the ample divisors in its interior and is dual to $\Eff(Y)$.
\end{itemize}
Given a $\QQ$-divisor $D$, we denote its divisor class by the same letter.
Note that 
\begin{itemize}
    \item $\Ample(Y)\subset\Eff(Y)$,
    \item $-K \equiv 3L-\sum_{i=1}^mE_i$,
    \item $L_{i,j}\equiv L-E_i-E_j$,
    \item among points $p_1,\ldots,p_m$, no three lie on the same line, no six belong to the same conic, no eight belong to a cubic curve with a node at one of them, see \cite[Theorem IV.2.5]{Manin}.
\end{itemize}

\begin{definition}\label{def:subd}
  Consider a strictly convex rational polyhedral cone $C\subset\QQ^n$ and a vector $r\in C$.
  Let $\mathcal{F}(C,r)$  be the set of all proper faces of $C$ such that the cone $\Cone(F,r)$ generated by $F$ and $r$ does not lie in a proper face of $C$.
  Then we call an \emph{open subdivision} 
  \[
    \Subd(C,r) = \{\Cone(F,r)\mid F\in\mathcal{F}(C,r)\}
  \] 
  the collection of such cones. We call elements $\Cone(F,r)$ \emph{subdivision cones}.
\end{definition}

\begin{proposition}\label{pr:subd}
  Consider an open subdivision $S=\Subd(C,r)$ of a strictly convex rational polyhedral cone $C\subset\QQ^n$ along a ray $r$ in $C$. Then 
  \[
    \relint(C)=\bigsqcup_{C'\in S}\relint(C').
  \]
\end{proposition}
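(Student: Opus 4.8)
The plan is to establish the two halves of a partition statement separately: first that the relative interiors of the subdivision cones are pairwise disjoint, and second that their union exhausts $\relint(C)$. Throughout I would fix a complete flag of faces or, more simply, work face-by-face using the fact that $C$ is strictly convex rational polyhedral, so each point $x\in C$ lies in the relative interior of a unique face $F_x$ of $C$ (the smallest face containing it), and $\relint(C)=\bigsqcup_{F} \relint(F)$ over all faces $F$ of $C$, including $C$ itself and the cone $\{0\}$.

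For the \emph{covering} direction, take $x\in\relint(C)$. Consider the ray from $r$ through $x$ extended backwards: the half-line $\{x + t(x-r) : t\ge 0\}$ meets the boundary $\partial C$ (it must, since $C$ is strictly convex and $x\neq r$ lies in the interior, unless $x$ is a positive multiple of $r$, a case I would handle by taking $F$ to be a facet whose supporting hyperplane is crossed; in fact if $x \in \relint(C)$ then moving away from $r$ eventually exits $C$). Let $y$ be the exit point, and let $F$ be the smallest face of $C$ containing $y$; then $F$ is a proper face. I claim $x\in\relint(\Cone(F,r))$ and that $F\in\mathcal F(C,r)$. The first claim holds because $x$ lies strictly between $r$ and $y$, with $y\in\relint(F)$, so $x$ is a strictly positive combination of $r$ and a relative-interior point of $F$; any such point lies in $\relint(\Cone(F,r))$. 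For the second claim, $\Cone(F,r)$ cannot lie in a proper face $G$ of $C$: if it did, then $r\in G$, but $r\in C$ and $x\in\relint(C)$ together with $x\in\Cone(F,r)\subset G$ would force $\relint(C)\cap G\neq\emptyset$, contradicting that $G$ is a proper face. Hence $F\in\mathcal F(C,r)$ and $x$ is covered.

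For \emph{disjointness}, suppose $x\in\relint(\Cone(F,r))\cap\relint(\Cone(F',r))$ with $F,F'\in\mathcal F(C,r)$, and I must show $F=F'$. Write $x = \lambda r + v$ with $\lambda\ge 0$ and $v\in\relint(F)$ (possibly $v=0$ if $F=\{0\}$), and similarly $x=\lambda' r + v'$ with $v'\in\relint(F')$. The key point is to recover $F$ from $x$ and $r$ intrinsically: $F$ is the smallest face of $C$ meeting the backward ray from $r$ through $x$. Concretely, the point $v = x-\lambda r$ lies on $\partial C$ in $\relint(F)$ precisely when $\lambda$ is maximal subject to $x-\lambda r\in C$; since this maximal $\lambda$ and the resulting point $v$ depend only on $x$ and $r$ (and on $C$), we get $v=v'$, $\lambda=\lambda'$, hence $\relint(F)$ and $\relint(F')$ share the point $v$, forcing $F=F'$ because distinct faces have disjoint relative interiors. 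I would need a short argument that $v$ genuinely lies in $\relint(F)$ and not merely in $F$ — this follows since $x\in\relint(\Cone(F,r))$ means $x$ is not in any proper subcone of the form $\Cone(F_0,r)$ with $F_0\subsetneq F$ a face, which translates to $v$ not lying in any proper face of $F$.

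The main obstacle I anticipate is the careful bookkeeping around degenerate cases: when $F=\{0\}$ (so $\Cone(F,r)$ is just the ray $\RR_{\ge0}r$), when $x$ is a positive multiple of $r$ itself, and the precise verification that the ``maximal $\lambda$ such that $x-\lambda r\in C$'' construction lands in the \emph{relative interior} of the correct face rather than on a lower-dimensional sub-face — this last point is exactly where the hypothesis $x\in\relint(\Cone(F,r))$ (as opposed to merely $x\in\Cone(F,r)$) does its work, and it should be stated as a small lemma: $x\in\relint(\Cone(F,r))$ iff $x=\lambda r + v$ with $\lambda>0$ (or the ray case) and $v\in\relint(F)$. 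Once that lemma is in place, both inclusions are short.
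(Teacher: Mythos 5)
Your overall strategy (attach to each $x\in\relint(C)$ the face of $C$ that one reaches by pushing $x$ away from $r$, and show this face is recovered intrinsically from $x$, $r$, $C$) is the same as the paper's, which finds the boundary ray of the two-dimensional cone cut out by the halfplane through $r$ and $x$. However, the specific construction in your covering step fails: the half-line $\{x+t(x-r):t\ge0\}$ need not meet the boundary of $C$ at all. Since the recession cone of $C$ is $C$ itself, it stays inside $C$ for all $t$ exactly when $x-r\in C$, and this does happen for interior points: take $C=\Cone(e_1,e_2)\subset\QQ^2$, $r=e_1$, $x=e_1+e_2\in\relint(C)$; then $x+t(x-r)=(1,1+t)\in\relint(C)$ for every $t\ge0$. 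So the claim ``if $x\in\relint(C)$ then moving away from $r$ eventually exits $C$'' is false, and the exit point $y$ on which your covering argument rests does not exist in general. Your treatment of the case $x\in\QQ_{>0}r$ is also off: the subdivision cone whose relative interior contains such an $x$ is $\Cone(\{0\},r)=\QQ_{\ge0}r$ (and $\{0\}\in\mathcal{F}(C,r)$ precisely when $r\in\relint(C)$, which is the paper's closing remark), not a cone over a facet $F$, because for $F\in\mathcal{F}(C,r)$ one has $r\notin F$, hence $\relint\Cone(F,r)=\QQ_{>0}r+\relint(F)$ misses $\QQ_{>0}r$ unless $F=\{0\}$.

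The good news is that the device you introduce for disjointness already repairs the covering step and is essentially the paper's construction: put $\lambda_{\max}=\max\{\lambda\ge0:\ x-\lambda r\in C\}$, which is finite (otherwise $-r\in C$, contradicting strict convexity) and strictly positive for $x\in\relint(C)$; let $v=x-\lambda_{\max}r$ and let $F$ be the unique face with $v\in\relint(F)$, so $F=\{0\}$ exactly when $x$ is proportional to $r$. Then $x=\lambda_{\max}r+v\in\QQ_{>0}r+\relint(F)=\relint\Cone(F,r)$, and $F\in\mathcal{F}(C,r)$ by your (correct) observation that a cone meeting $\relint(C)$ cannot lie in a proper face. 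Your disjointness half is sound, but the asserted ``maximal $\lambda$'' characterization is exactly the point that needs proof: for $F\in\mathcal{F}(C,r)$ we have $r\notin F$, so a supporting functional $\ell$ with $\ell\ge0$ on $C$ and $F=C\cap\ell^{\perp}$ satisfies $\ell(r)>0$, whence $\ell(x-\lambda' r)<0$ for $\lambda'>\lambda$; this shows the $\lambda$ in the decomposition is $\lambda_{\max}$ and that $v$ is determined by $x$ and $r$. Finally, the stated equality also needs $\relint\Cone(F,r)\subseteq\relint(C)$ for $F\in\mathcal{F}(C,r)$ (one line with the same supporting functionals); neither you nor, admittedly, the paper's own terse proof spells this out.
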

\begin{proof}
  For any $a\in \relint(C)$ not proportial to $r$ we consider a halfplane $H$ generated by $r,-r,a$.
  It intersects $C\setminus\relint(C)$ by some ray $\QQ_{\ge0}b$. 
  Then there is a unique face $F$ such that $b\in\relint(F)$.

  Finally, if $a=r\in\relint(C)$, then the corresponding face $F$ is zero and $r\in\relint\Cone(r)$.
\end{proof}

See \cite[Proposition 4.3]{Pe20} and \cite[Section 2.1]{CPW15} for the following known statements. \begin{proposition}
  Consider the open subdivision $S_1=\Subd(\Eff(Y),-K)$ of $\Eff(Y)$ along $-K$. Any element of $S_1$ up to a choice of $\sigma$ and a permutation of $E_1,\ldots,E_m$ equals one of the cones $B_0,\ldots,B_m, B_P, C$, where
  \begin{itemize}
    \item $B_k=\Cone(-K,E_1,\ldots,E_k)$ for $k=0,\ldots,m$;
    \item $B_P=\Cone(-K,E_1,\ldots,E_{m-2},L_{m-1,m})$;
    \item $C=\Cone(E_1,\ldots,E_{m-1},L_{1,m},\ldots,L_{m-1,m})$.
  \end{itemize}    
\end{proposition}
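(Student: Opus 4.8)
The plan is to reduce the statement to a classification of the proper faces of $\Eff(Y)$ up to a natural symmetry group, and then to classify those faces through the contractions they determine. We first note that $-K$ lies in the \emph{interior} of $\Eff(Y)$: the cone $\Eff(Y)$ is generated by the $(-1)$-curves, and the sum of all their classes is invariant under the Weyl group $W=W(E_m)\subset\mathrm{O}(\Pic Y)$ of the root lattice $K^{\perp}$, hence is a positive multiple of the unique $W$-invariant ray $\QQ_{\ge0}(-K)$; thus $-K$ is a positive combination of \emph{all} extremal rays of $\Eff(Y)$. It follows that for every proper face $F$ of $\Eff(Y)$ the cone $\Cone(F,-K)$ contains the interior point $-K$ and so lies in no proper face of $\Eff(Y)$; that is, $\mathcal F(\Eff(Y),-K)$ is the set of all proper faces, and $S_1=\{\Cone(F,-K):F\text{ a proper face of }\Eff(Y)\}$. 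Since the operations ``change the contraction $\sigma\colon Y\to\PP^2$'' and ``permute $E_1,\dots,E_m$'' generate $W$ --- the transpositions together with the standard quadratic Cremona involution generate $W(E_m)$, see \cite{Manin}, \cite{Dolg-CAG} --- and since $W$ fixes $-K$ and permutes the $(-1)$-curves, the map $F\mapsto\Cone(F,-K)$ is $W$-equivariant, and it suffices to show that every proper face of $\Eff(Y)$ is $W$-equivalent to $\{0\}$, to $\Cone(E_1,\dots,E_k)$ for some $1\le k\le m$, to $\Cone(E_1,\dots,E_{m-2},L_{m-1,m})$, or to $\Cone(E_1,\dots,E_{m-1},L_{1,m},\dots,L_{m-1,m})$.

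For the classification we would write a proper face as $H^{\perp}\cap\Eff(Y)$ for a nonzero class $H$ of the dual cone $\Ample(Y)$ (so $H$ is nef, and on a del Pezzo surface nef classes are semiample) and analyse the associated morphism $\phi_H\colon Y\to Z$; then $H^{\perp}\cap\Eff(Y)$ is the cone on the classes of the curves contracted by $\phi_H$, and $Z$ is a (possibly singular) del Pezzo surface. If $H^2=0$ then $Z=\PP^1$ and $\phi_H$ is a conic bundle; since $e(Y)=12-d$ it has exactly $m-1$ singular fibres, each the transverse union of two $(-1)$-curves, and after a choice of $\sigma$ and a permutation we may take $H=L-E_m$, whose singular fibres are $E_i+L_{i,m}$, so that the face is $\Cone(E_1,\dots,E_{m-1},L_{1,m},\dots,L_{m-1,m})$. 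If $H^2>0$ then $\phi_H$ is birational, and $H^{\perp}$ is negative definite by the Hodge index theorem, so the curves contracted by $\phi_H$ are pairwise disjoint $(-1)$-curves --- a connected reducible exceptional fibre would contain two $(-1)$-curves whose $2\times 2$ intersection matrix has nonpositive determinant, which is impossible inside a negative definite lattice --- hence $\phi_H$ is a composition of $j$ blow-downs of disjoint $(-1)$-curves onto a \emph{smooth} del Pezzo surface $Z$ of degree $d+j$ with $0\le j\le m$, and the face is the cone on those $j$ curves.

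It then remains to classify, up to $W$, the sets of $j$ simultaneously contractible disjoint $(-1)$-curves. For $j\le m-2$ we have $\deg Z=d+j\le 7$, so $Z$ is again a blow-up of $\PP^2$ and the $j$ curves form an initial segment of a geometric basis of $\Pic Y$; as $W$ acts transitively on geometric bases, the face is $W$-equivalent to $\Cone(E_1,\dots,E_j)$. The same reasoning handles $j=m$ (then $Z=\PP^2$) and $j=m-1$ with $Z\cong\mathbb{F}_1$; in the remaining case $j=m-1$ with $Z\cong\PP^1\times\PP^1$ we use that $W$ acts transitively on the contractions $Y\to\PP^1\times\PP^1$ to get $W$-equivalence to $\Cone(E_1,\dots,E_{m-2},L_{m-1,m})$ (first contract $E_1,\dots,E_{m-2}$, then $L_{m-1,m}$). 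Conversely, each listed cone really is $\Cone(F,-K)$ for an actual face $F$, since the relevant blow-down morphism, or the pencil $|L-E_m|$, supplies a cutting class in $\Ample(Y)$; applying $F\mapsto\Cone(F,-K)$ to these faces then recovers exactly $B_0,\dots,B_m$, $B_P$ and $C$. The hard part will be the inputs to this last step: that an arbitrary proper face has the form $H^{\perp}\cap\Eff(Y)$ with the stated control on $\phi_H$ and $Z$, and the transitivity of $W$ on geometric bases and on contractions to $\PP^1\times\PP^1$; for these one may appeal to the descriptions in \cite[Section~2.1]{CPW15} and \cite[Proposition~4.3]{Pe20}.
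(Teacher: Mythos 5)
Your proposal is essentially correct, but it cannot be compared with an argument in the paper itself: the paper states this proposition as known and simply refers to \cite[Proposition 4.3]{Pe20} and \cite[Section 2.1]{CPW15}, so what you have written is a self-contained proof of a fact the paper outsources. Your route — observe that $-K$ is interior to $\Eff(Y)$, so $S_1$ is indexed by all proper faces; identify ``up to a choice of $\sigma$ and a permutation'' with the Weyl-group action on markings; and classify faces via the contraction $\phi_H$ attached to a nef class $H$ with $F=H^\perp\cap\Eff(Y)$, splitting into the conic-bundle case $H^2=0$ and the birational case $H^2>0$ (disjoint $(-1)$-curves, smooth del Pezzo image, with the $\PP^1\times\PP^1$ subcase producing $B_P$) — is precisely the standard argument underlying those references, and it has the merit of working uniformly in all degrees $m=9-d$ rather than only in the degrees treated there. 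Two caveats. First, your closing citation of \cite[Proposition 4.3]{Pe20} for the remaining inputs is circular in this context, since that is exactly the statement being proved; the facts you actually need (every birational morphism from a del Pezzo surface to $\PP^2$ is a blowup of distinct points in general position, simple transitivity of the Weyl group on geometric markings, the identification of a conic-bundle fiber class with $L-E_m$, and the factorization of a contraction to $\PP^1\times\PP^1$ through $\mathrm{Bl}_q(\PP^1\times\PP^1)\cong\mathrm{Bl}_{q_1,q_2}\PP^2$) should be referred to \cite{Manin} and \cite[Section 8.2]{Dolg-CAG}. Second, your last sentence asserts that $F\mapsto\Cone(F,-K)$ sends the conic-bundle facet to $C$; in fact $-K\notin C$ (every generator of $C$ is orthogonal to the nef class $L-E_m$ while $-K\cdot(L-E_m)=2$), so the corresponding element of $S_1$ is the full-dimensional cone $\Cone(C,-K)$ rather than $C$ itself. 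This imprecision is present in the proposition as stated in the paper (and propagates to the description of $S_2$), so it is not a defect of your argument, but a careful write-up should either list $\Cone(-K,E_1,\ldots,E_{m-1},L_{1,m},\ldots,L_{m-1,m})$ for this element or make explicit the convention that $C$ denotes the underlying facet which is then subdivided further.
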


\begin{proposition}
  Consider the open subdivision $S_2=\Subd(C,L-E_m)$ of $C$ along the fiber class $L-E_m$. Any element of $S_2$, if it contains ample divisor classes, up to a choice of $\sigma$ and a permutation of $E_1,\ldots,E_m$ equals one of the cones $C_0,\ldots,C_m,C_P$, where
  \begin{itemize}
    \item $C_k=\Cone(-K,E_1,\ldots,E_{k},L-E_{m})$ for $k=0,\ldots,m-1$;
    \item $C_P=\Cone(-K,E_1,\ldots,E_{m-2},L_{m-1,m},L-E_m)$.
  \end{itemize}    
\end{proposition}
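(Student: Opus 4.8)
The plan is to combine Proposition~\ref{pr:subd} with an explicit case analysis of the proper faces $F$ of $C=\Cone(E_1,\ldots,E_{m-1},L_{1,m},\ldots,L_{m-1,m})$ for which $\Cone(F,L-E_m)$ is not contained in a proper face of $C$. First I would recall that on the del Pezzo surface $Y$ the extremal rays of $\Eff(Y)$ are the classes of the $(-1)$-curves, and the cone $C$ is the subdivision cone of $S_1$ spanned by the particular $(-1)$-curves listed; its generators $E_1,\ldots,E_{m-1},L_{1,m},\ldots,L_{m-1,m}$ are linearly independent when $d\ge 2$ (or at least in general position so that $C$ is simplicial of the expected dimension $\rho=m+1$), so the proper faces $F$ are exactly the subsets of these generators. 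Then $\Subd(C,L-E_m)$ consists of the cones $\Cone(F,L-E_m)$ for those faces $F$, and by Proposition~\ref{pr:subd} their relative interiors partition $\relint(C)$.

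The next step is to identify which subdivision cones can contain ample classes, and to see that, up to the $\Aut$ of the configuration (a choice of contraction $\sigma$ and a permutation of $E_1,\ldots,E_m$), each such cone is one of $C_0,\ldots,C_{m-1},C_P$. The key algebraic identity is $L_{i,m}\equiv L-E_i-E_m$, whence $L_{i,m}+E_i+E_m\equiv L$, i.e. $L-E_m\equiv L_{i,m}+E_i$; this shows that the ray $L-E_m$ lies in the interior of the two-dimensional face $\Cone(L_{i,m},E_i)$ for each $i=1,\ldots,m-1$, so adjoining $L-E_m$ to a face $F$ that already contains such a pair $\{L_{i,m},E_i\}$ produces a cone lying in a proper face of $C$ and hence not in $\mathcal{F}(C,L-E_m)$. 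Consequently a face $F$ giving a subdivision cone must, for each $i\le m-1$, omit at least one of $E_i$, $L_{i,m}$. I would then argue that if $F$ contains some $L_{j,m}$ then, after renumbering, $F$ can involve at most the single $L$-type generator $L_{m-1,m}$ together with a subset of $E_1,\ldots,E_{m-2}$: indeed two distinct lines $L_{j,m},L_{k,m}$ together span, with $-K$, a face isomorphic to the $B_P$-type configuration, and one checks $L-E_m$ together with $E_1,\ldots,E_{m-2},L_{m-1,m}$ already generates a cone of full-enough dimension that adding a second $L_{j,m}$ would again drop into a proper face. Pairing this with the anticanonical relation $-K\equiv 3L-\sum E_i$, which lets one replace the missing generators, one recovers exactly the descriptions $C_k=\Cone(-K,E_1,\ldots,E_k,L-E_m)$ and $C_P=\Cone(-K,E_1,\ldots,E_{m-2},L_{m-1,m},L-E_m)$; the presence of $-K$ in each is forced because $-K\in\relint\Eff(Y)$ and any cone meeting $\Ample(Y)$, which sits in $\relint\Eff(Y)$, must contain a neighbourhood of the $-K$ direction inside $C$.

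Finally, I would verify that the cones $C_k$ and $C_P$ indeed do contain ample classes — so the list is not vacuous — by exhibiting, e.g. a small perturbation of $-K$, a divisor meeting every $(-1)$-curve positively; this is the standard fact that $-K$ is ample on $Y$ and lies in the closure of each $\relint(C_k)$, together with $\relint(C_k)\cap\relint(C)\ne\emptyset$. The main obstacle I anticipate is the bookkeeping in the combinatorial classification: one must show that every face $F\in\mathcal{F}(C,L-E_m)$ is equivalent under the symmetry group (reindexing the $E_i$ and swapping the role of $E_m$ with other exceptional curves via a change of contraction $\sigma$) to one of the stated shapes, and in particular that no ``mixed'' configuration containing both several $E_i$ and several $L_{j,m}$ survives; controlling this uniformly for all degrees $d\in\{1,\ldots,5\}$, and handling the low-degree cases where the generators of $C$ may fail to be independent, is the delicate part, and I would either dispatch it by the explicit root-system description of $\Eff(Y)$ or defer to the cited references \cite[Proposition 4.3]{Pe20} and \cite[Section 2.1]{CPW15}.
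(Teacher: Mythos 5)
The paper itself offers no proof of this proposition: it is quoted as known, with a pointer to \cite[Proposition 4.3]{Pe20} and \cite[Section 2.1]{CPW15}. So your sketch has to stand on its own, and as written it does not. The central problem is that you take the cone $C=\Cone(E_1,\ldots,E_{m-1},L_{1,m},\ldots,L_{m-1,m})$ literally. Every generator of that cone is orthogonal to the nef and effective class $L-E_m$ (indeed $E_i\cdot(L-E_m)=L_{i,m}\cdot(L-E_m)=0$), while $-K\cdot(L-E_m)=2$ and $H\cdot(L-E_m)>0$ for every ample $H$. Hence $-K\notin C$, none of the listed representatives $C_k$, $C_P$ (which all contain $-K$) can equal an element of $\Subd(C,L-E_m)$, and in fact no element of that subdivision contains any ample class, so under your reading the statement is vacuous. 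The statement only makes sense when the cone being subdivided is the $S_1$-subdivision cone $\Cone(-K,E_1,\ldots,E_{m-1},L_{1,m},\ldots,L_{m-1,m})$: there $L-E_m$ sits in the relative interior of the facet spanned by the $(-1)$-curves, the condition that $\Cone(F,L-E_m)$ not lie in a proper face becomes the genuine selection device, and it forces every surviving face $F$ to contain the ray through $-K$ --- this, and not your claim that any cone meeting $\Ample(Y)$ must contain a neighbourhood of the $-K$ direction (false: a single ray through a non-anticanonical ample class meets the ample cone), is where the $-K$ in $C_k$ and $C_P$ comes from. You never notice this tension.

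Several intermediate steps are also incorrect. The generators of $C$ are far from independent: $E_i+L_{i,m}=L-E_m$ for every $i$, so the $2(m-1)$ generators span only an $m$-dimensional space (not $m+1$), $C$ is not simplicial for any $m\ge3$, its faces are not arbitrary subsets of generators, and $\Cone(E_i,L_{i,m})$ is not a face at all, since it contains $L-E_m$, which is a relative interior point of $C$ (it is $\tfrac1{m-1}$ of the sum of all generators). For the same reason your exclusion mechanism cannot operate: with $L-E_m\in\relint(C)$, no cone $\Cone(F,L-E_m)$ lies in a proper face of $C$, so $\mathcal{F}(C,L-E_m)$ would simply be the set of all proper faces. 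Finally, faces containing two distinct lines $L_{j,m},L_{k,m}$ do exist (these are disjoint $(-1)$-curves spanning a face of $\Eff(Y)$ inside $C$), and eliminating them --- by showing the corresponding subdivision cones contain no ample classes or are carried to the listed types by a change of $\sigma$ --- is exactly the combinatorial heart of the classification; you defer this to the references, which is what the paper does anyway, so your sketch adds the errors above without supplying the missing argument.
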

Proposition~\ref{pr:subd} guarantees that any ample divisor class up to a choice of $E_1,\ldots,E_m$ belongs to the relative interior of one of cones $B_0,\ldots,B_m,B_P,C_0,\ldots,C_P$. Note that notation corresponds to one in \cite{Pe20}, except $B_P$ and $C_P$, which are denoted by $B_5^\prime$ and $C_5^\prime$ in \cite{Pe20}.

\begin{remark}
  Another open subdivision implicitly used, e.g., in \cite[Section 6.3]{CPW15} is as follows. Given a cone $C_k$ or $B_k$, we subdivide it along $E_1+\ldots+E_k$ and up to the permutation of $E_1,\ldots,E_k$ consider only the subdivision cones with non-increasing coordinates in $E_1,\ldots,E_k$.
\end{remark}

The conditions on a divisor $H$ that provide $H$-polarity and $H$-completeness of a cylinder collection are expressed through the polarity and forbidden cones respectively.

\begin{definition}[{\cite[Definition 6.1]{Pe20}}]
  Given an open subset $U\subset Y$, which complement consists of components $D_1,\ldots,D_n$, we define the \emph{polarity cone}
  $\Pol(U)=\Cone(D_1,\ldots,D_n)$. Then $U$ is $H$-polar for any divisor $H$ from $\relint(\Pol(U))$.
  
  Given a collection $\mathcal{U}$ 
  of open subsets of $Y$, we define the \emph{polarity cone} via 
  \[\Pol(\mathcal{U})=\bigcap_{U\in\mathcal{U}}\Pol(U).\] Let $D_1,\ldots,D_n$ be the irreducible curves in $Y\setminus \bigcup_{U\in\mathcal{U}}U$. We define the \emph{forbidden cone} via 
  \[\Forb(\mathcal{U})=\Cone(D_1,\ldots,D_n).\]
  Then $\mathcal{U}$ is $H$-complete for any $H\notin\Forb(\mathcal{U})$.
  \end{definition}
  
\section{Cylinder constructions}\label{sec:cylinders}
Here we recall some cylinder constructions across surfaces of varying degree and discuss their properties in the spirit of Section~\ref{sec:bubble-systems}. More constructions and their implementation in Sagemath are available at \cite{Pe23-sage}, see Appendix~\ref{sec:usage} for usage information. The known coverings implying generic flexibility for different degrees and divisor types are presented as tests in \cite{Pe23-sage}.

For any curve $C\in\PP^2$ we denote its proper transform in $Y$ by $\tilde{C}$. 

\begin{example}\label{ex:lines}
  Consider the pencil of lines in $\PP^2$ passing through a point $p_i$, see \cite[Example 4.1.1]{CPW15}. Then the complement $\PP^2\setminus\bigcup_{j\neq i}L_{ij}$ is a cylinder. The bubble class of the general fiber is $(L,-p_i)$, the base locus is $p_i$, and fibers in the complement lie in bubble classes $(L,-p_i-p_j)$.
   Its polarity and forbidden cones in $\Pic_\QQ Y$ coincide and are generated by $E_1,\ldots,E_m$ and $L-E_i-E_j$ for $j\neq i$. 
\end{example}

Consider a cubic del Pezzo surface $Y$, which is the blowup of $\PP^2$ at points $p_1,\ldots,p_6$, and choose a point $p\in\PP^2$ such that $p,p_1,\ldots,p_5$ lie on a conic. Consider a conic $Q\in|2L-p_1-\ldots-p_5|$ and a line $T\in|L-p-p_6|$ tangent to $Q$.

The cylinder $U$ is the isomorphic preimage 
of $\PP^2\setminus T\cup Q$. Fibration is given by the pencil $\langle Q,2T\rangle$, the corresponding bubble class is $(2L,-p)$.

\begin{example}\label{ex:tangent}
  Consider a conic $Q$ on $\PP^2$ and its tangent line $T$ at $p\in Q$. Then the complement to $Q\cup T$ is a cylinder with a general fiber of bubble class $(2L,p)$. It corresponds to the pencil of curves $P=\langle Q,2T\rangle$.
  
  The moduli space of such pencils is $5$-dimensional, and we may impose at most 5 independent conditions of form `points $p_i$ and $p_j$ belong to the same fiber' or `a point $p_i$ lies on $T$'. For example, let $p_1,\ldots,p_5$ lie on the same fiber, which we denote by $Q$, and $p_6$ on $T$.

  Then the cylinder $U$ on $Y$ is the complement of reducible fibers of $\sigma^*(P)$, which are given by bubble classes $\tilde{Q}=|2L-p_1-\ldots-p_5|$ for $Q$, $|L-p_6|$ for $T$ and $|2L-p_i|$ for $i>6$, if any.
  The polarity cone of $U$ is generated by curves the corresponding divisor classes and classes of exceptional curves $E_1,\ldots,E_m$.

  In this case a pair of such cylinders exists (with same polarity cone) and comprises a transversal collection. It can be seen from the tangency condition on $T$ and $Q$, which is of degree 2.
    Indeed, there are two tangents $T, T'$ to $Q$ through $p_6$. 
      Its forbidden cone is  
      $\Cone(\tilde{Q},E_1,\ldots,E_m)$. Since $E_m$ is non-positive on this cone, it does not contain ample divisors. 
      So, by Theorem~\ref{th:flex}, this collection implies generic transitivity for any ample divisor class in the relative interior of the polarity cone.
      See \cite[Example 4.1.2]{CPW15} and \cite[Example 5.2]{Pe20}. 
  \end{example}

\begin{example}\label{ex:cuspidal}
We follow \cite[Theorem 6.2.2, Case 2.1]{CPW15} and \cite[Example 4.1.13]{CPW15}.
Consider a del Pezzo surface $Y$ of degree $2\le d \le 5$. %
Choose a point $p\in \PP^2$ distinct from $p_1,\ldots,p_m$ and consider a cuspidal curve $C$, a conic $Q$ and lines $L_1,\ldots,L_4$ as follows.
\begin{align*}
    C\in&(3L,-2p-p_1-\ldots-p_m)=(-K_Y,-2p),\\
    Q\in&(2L,-p-p_1-\ldots-p_4),\\
    L_i\in&(L,-p-p_i).
\end{align*}
Then $U=Y\setminus(C\cup Q\cup L_1\cup\ldots\cup L_4)$ is a cylinder corresponding  to  the pencil $\langle 2C, Q+L_1+\ldots+L_4\rangle$ and the bubble class $(6L,-2p-p_1-\ldots-p_4)$ on $\PP^2$, which is $(6L-2E_1-\ldots-2E_4,-2p)$ on $Y$.

Let us consider $\mathrm{Bl}_pY$ and 
contract it to $\PP^2$ by blowing down proper transforms $Q',L_1',\ldots,L_4', E_5,\ldots,E_m$, which are disjoint (-1)-curves on $Y'$, and  let $p_0', \ldots, p_5'$ together with $p_6, \ldots, p_m$ be their images. In other words, we send $\mathrm{Bl}_pY\in\mathcal{B}_{\PP^2}$ isomorphically to some $Y'\in\mathcal{B}_{\PP^2}$ so that $Q',L_1',\ldots,L_4', E_5,\ldots,E_m$ are sent to exceptional curves over $\PP^2$. This isomorphism, say $\phi$, is described by a transformation of the type of the surface induced by the change of contraction to $\PP^2$.

Then $\phi_*(C)\in|L|$, $\phi_*(E_p)\in |2L-p_1'-\ldots-p_5'|$, and $(\sigma_{Y'}\circ\phi)_*(C)$ is a line tangent to the conic $(\sigma_{Y'}\circ\phi)_*(E_p)$, so this example is a variation of Example~\ref{ex:tangent}.

If we start this construction with another point $p'\neq p$ that is also the cusp of an anticanonical cuspidal curve $C'$ on $Y$, then the obtained cylinder $U'$ shares the same data with $U$.

  The resulting pair of cylinders is transversal, its forbidden cone $\Cone(E_{5},\ldots,E_{m})$ does not contain ample divisors and its polarity cone is
\[
  \Cone(-K,2L-E_{1}-\ldots-E_{4}, L-E_{1}, \ldots, L-E_4, E_{5},\ldots,E_{m}).
\]

\end{example}

\begin{remark}
  Let $Y$ be a del Pezzo surface of degree 2, then there is a double cover $Y\to\PP^2$ ramified over a quartic curve $C$ in $\PP^2$, e.g., see \cite[Section 6.3.3]{Dolg-CAG}. It is known that for a generic del Pezzo surface of degree 2 the quartic $C$ does not have hyperinflection points. Indeed, according to \cite[Section 6.4]{Dolg-CAG} and \cite{Cohen-quartic}, the Salmon invariant of degree 60 vanishes on the locus of quartics with an
  inflection bitangent.

  In particular, a generic del Pezzo surface of degree 2  and any del Pezzo surface of degree $\ge3$ contain a cuspidal rational curve in its anticanonical class. Moreover, it contains at least a pair of such curves, e.g., see \cite[Lemma 3.1]{KimPark21}.
\end{remark}

\section{Generic flexibility in degree $\ge2$}\label{sec:deg2}
We provide an example of usage of the introduced constructions.
Other similar statements on generic flexibility in different degrees, e.g., from 
 \cite{P13}, \cite{CPW13}, \cite{Pe20}, \cite{KimPark21}, can be found in \cite{Pe23-sage}.

\begin{example}\label{ex:cuspcubic-flex}
  Let $Y$ be a del Pezzo surface of degree $d\in\{2,\ldots,5\}$ that contains an anticanonical cuspidal rational curve. 
  Let $C$ be one of cones $B_{5-d}$ or $C_{5-d}$. Then Example~\ref{ex:cuspidal} provides an $H$-complete and $H$-polar transversal collection for any $H\in\relint(C).$

  For $d=2$ this corresponds to \cite[Theorem 6.2.3, Case 2.1]{CPW15} and \cite[Case 3.1.2]{KimPark21}.

\end{example}

\begin{example}\label{ex:B2-section}
  Consider the case of degree $2$ and the cone $B_2$. The existence of $\G_a$-actions and generic flexibility are checked in see \cite[Theorem 6.2.3]{CPW15} and \cite[Lemma 5.1]{KimPark21} respectively for the part of the cone $B_2$ as depicted in Fig.\ref{fig:B2_CPW}.
  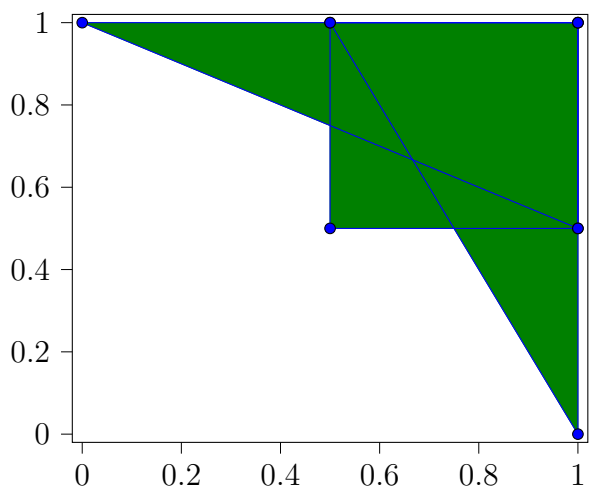
\begin{figure}[!ht]
    \centering
    \begin{tikzpicture}

\definecolor{darkgray176}{RGB}{176,176,176}
\definecolor{green}{RGB}{0,128,0}

\begin{axis}[
tick align=outside,
tick pos=left,
x grid style={darkgray176},
xmin=-0.02, xmax=1.02,
xtick style={color=black},
y grid style={darkgray176},
ymin=-0.02, ymax=1.02,
ytick style={color=black}
]
\path [draw=green, fill=green, line width=0pt]
(axis cs:1,0.5)
--(axis cs:0,1)
--(axis cs:1,1)
--cycle;
\path [draw=green, fill=green, line width=0pt]
(axis cs:0.5,1)
--(axis cs:1,0)
--(axis cs:1,1)
--cycle;
\path [draw=green, fill=green, line width=0pt]
(axis cs:0.5,0.5)
--(axis cs:1,0.5)
--(axis cs:1,1)
--(axis cs:0.5,1)
--cycle;
\addplot [blue, opacity=1.0]
table {%
0 1
1 1
};
\addplot [blue, opacity=1.0]
table {%
0 1
1 0.5
};
\addplot [blue, opacity=1.0]
table {%
1 1
1 0.5
};
\addplot [blue, opacity=1.0]
table {%
1 0
1 1
};
\addplot [blue, opacity=1.0]
table {%
1 0
0.5 1
};
\addplot [blue, opacity=1.0]
table {%
1 1
0.5 1
};
\addplot [blue, opacity=1.0]
table {%
1 1
0.5 1
};
\addplot [blue, opacity=1.0]
table {%
1 1
1 0.5
};
\addplot [blue, opacity=1.0]
table {%
0.5 0.5
0.5 1
};
\addplot [blue, opacity=1.0]
table {%
0.5 0.5
1 0.5
};
\addplot [fill=blue, mark=*, only marks]
table{%
x  y
0 1
1 1
1 0.5
};
\addplot [fill=blue, mark=*, only marks]
table{%
x  y
1 0
1 1
0.5 1
};
\addplot [fill=blue, mark=*, only marks]
table{%
x  y
1 1
0.5 0.5
0.5 1
1 0.5
};
\end{axis}

\end{tikzpicture}
    \caption{The cone $B_2$ in degree 2 covered by polarity cones.
    A point $(a_1,a_2)$ corresponds to 
     the divisor class $H=-K+a_1E_1+a_2E_2$.
     The depicted square with vertices $(0,0),(1,0),(0,1),(1,1)$ is the section of the ample part of $B_2$ by the hyperplane $-K+\bigoplus \QQ E_i$.
     The filled part is covered by polarity cones.}
    \label{fig:B2_CPW}
  \end{figure}

\end{example}

\begin{remark}
  Another approach for  studying partial coverings of introduced subdivision cones is to compute the volume of the covered part relative to the volume of the whole cone. By the volume of the cone we mean the volume of its section with some hyperplane, say $(-K,\cdot)=1$. %
  One could also further subdivide the studied cone.
\end{remark}

\section{Generic flexibility in degree 1}\label{sec:deg1}
Here we present new results on generic flexibility in degree 1.
In the following theorem we use the construction from \cite[Theorem 6.3.1]{CPW15}.
\begin{theorem}\label{th:deg1-B}
  Let $Y$ be a del Pezzo surface of degree $1$ and $H$ be a very ample divisor such that
  \begin{equation}\label{eq:deg1-B-cone}    
  H\in\relint\Cone(2L-E_1,2L-E_2,L-E_3, 2L-E_4-\ldots-E_8,E_1,\ldots,E_8).
\end{equation}
Then the cone $\AffCone_H(Y)$ is generically flexible.
\end{theorem}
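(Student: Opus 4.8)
The plan is to construct an explicit transversal, $H$-polar, and $H$-complete collection of cylinders on $Y$ and then invoke Theorem~\ref{th:flex} together with Theorem~\ref{th:AFKKZ}. Since the generator set in \eqref{eq:deg1-B-cone} consists of $E_1,\dots,E_8$ together with the classes $2L-E_1$, $2L-E_2$, $L-E_3$, and $2L-E_4-\dots-E_8$, I expect the relevant cylinders to be built from the construction of \cite[Theorem 6.3.1]{CPW15}, in the same spirit as Example~\ref{ex:cuspidal}: that is, a pencil whose general fiber has a bubble class of the shape $(dL,-2p-\dots)$ for a suitable auxiliary point $p\in\PP^2$, arranged so that the complement $Y\setminus U$ is supported precisely on the curves in the classes appearing as generators above. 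Concretely, one takes conics through $p$ and four of the $p_i$, a line through $p$ and a fifth point, a cuspidal anticanonical curve with cusp at $p$, etc., and reads off from the construction that $\Pol(U)$ is exactly (or contains) $\Cone(2L-E_1,2L-E_2,L-E_3,2L-E_4-\dots-E_8,E_1,\dots,E_8)$.

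First I would recall the precise cylinder from \cite[Theorem 6.3.1]{CPW15} and re-express its fibration and its complement components in terms of bubble classes on $\PP^2$, following the formalism of Section~\ref{sec:bubble-systems}; this identifies the components $D_1,\dots,D_n$ of $Y\setminus U$ and hence $\Pol(U)=\Cone(D_1,\dots,D_n)$. Second, as in Examples~\ref{ex:tangent} and \ref{ex:cuspidal}, I would produce a second cylinder $U'$ sharing the same combinatorial data (coming from the second solution of a degree-$2$ tangency condition, or from a second anticanonical cuspidal curve — available on a degree-$1$ del Pezzo surface, cf.\ the remark after Example~\ref{ex:cuspidal}), so that $\{U,U'\}$ has the same polarity cone. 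Third, I would verify transversality of $\{U,U'\}$: the two fibrations have distinct general fibers, and because the polarity data forces the union $U\cup U'$ to meet every fiber of each pencil outside its single base point, there is no nontrivial $\{U,U'\}$-invariant subset. Fourth, I would compute the forbidden cone $\Forb(\{U,U'\})$, generated by the curves in $Y\setminus(U\cup U')$, and check that it contains no ample class — typically because some $E_i$ (here one of $E_4,\dots,E_8$, say $E_8$) is non-positive on it, exactly as in Examples~\ref{ex:tangent} and \ref{ex:cuspidal}. Finally, for $H\in\relint$ of the cone in \eqref{eq:deg1-B-cone}, $H$ lies in $\relint\Pol(\{U,U'\})$, so the collection is $H$-polar, and $H\notin\Forb(\{U,U'\})$, so it is $H$-complete; Theorem~\ref{th:flex} then yields an open $\SAut(X)$-orbit in $X=\AffCone_H(Y)$, and Theorem~\ref{th:AFKKZ} gives generic flexibility.

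The main obstacle I anticipate is the bookkeeping that confirms $\Pol(\{U,U'\})$ is genuinely the cone in \eqref{eq:deg1-B-cone}, neither larger nor smaller: one must be certain that the complement of $U$ consists of exactly the curves in the stated classes (with no extra reducible fibers, and with each stated generator actually realized by an irreducible curve for the relevant degree-$1$ configuration), and that the construction of \cite[Theorem 6.3.1]{CPW15} remains valid under the genericity hypotheses implicit in $Y$ being an honest del Pezzo surface of degree $1$ (existence of the cuspidal anticanonical curve, generic position of the auxiliary point $p$, the line being tangent to the conic, etc.). A secondary subtlety is checking that $H$ in the relative interior of \eqref{eq:deg1-B-cone} is indeed very ample and that the relative interior of $\Pol(\{U,U'\})$ contains ample classes at all — i.e.\ that $\relint\Cone(2L-E_1,2L-E_2,L-E_3,2L-E_4-\dots-E_8,E_1,\dots,E_8)\cap\Ample(Y)\neq\emptyset$ — which is where the ``very ample'' hypothesis on $H$ does real work. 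These verifications are exactly the kind of combinatorial computations the Sagemath module \cite{Pe23-sage} is designed to automate, so in practice I would carry them out there and cross-check by hand on a few generators.
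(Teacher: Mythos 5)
The overall scheme of your proposal (a transversal pair of cylinders, polarity and forbidden cones, then Theorem~\ref{th:flex} and Theorem~\ref{th:AFKKZ}) is the right one, but the decisive step --- exhibiting a cylinder whose complement realizes exactly the generators of \eqref{eq:deg1-B-cone} --- is where your argument has a genuine gap. You model the construction on Example~\ref{ex:cuspidal}, i.e.\ a pencil with general fiber of class $(dL,-2p-\cdots)$ containing a cuspidal anticanonical curve. Any such cylinder has the cuspidal curve, of class $-K$, among the removed fiber components, and in the degree-$1$ analogue of Example~\ref{ex:cuspidal} the polarity cone is $\Cone(-K,\,2L-E_1-\cdots-E_4,\,L-E_1,\ldots,L-E_4,\,E_5,\ldots,E_8)$. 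This cone contains neither $E_1$ nor $2L-E_4-\cdots-E_8$ (inspect the coefficient of $L$, resp.\ of $E_1$ and $E_5$, in any nonnegative combination of the generators), so it does not contain the cone of \eqref{eq:deg1-B-cone}; hence for $H$ in the relative interior of \eqref{eq:deg1-B-cone} the collection is in general not $H$-polar, and the hypothesis of Theorem~\ref{th:flex} fails. Note also that $-K$ does not even lie in the cone \eqref{eq:deg1-B-cone}, and that Example~\ref{ex:cuspidal} is stated for degree $\ge2$ with the cusp at a point distinct from $p_1,\ldots,p_m$. So the claim ``$\Pol(U)$ is exactly (or contains)'' the stated cone cannot be read off from the construction you describe; the bookkeeping you defer is precisely where the argument breaks.

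What the paper actually uses is the tangent-conic pencil of Example~\ref{ex:tangent} with $k=3$: a pencil $\langle Q,2T\rangle$ of conics tangent to a line $T$ at an auxiliary point $p$, arranged so that $p_4,\ldots,p_8$ lie on one member $Q$, $p_3\in T$, and $p_1$, $p_2$ lie on two further members. The removed components are then exactly curves of classes $2L-E_4-\cdots-E_8$, $L-E_3$, $2L-E_1$, $2L-E_2$ and $E_1,\ldots,E_8$, i.e.\ the generators of \eqref{eq:deg1-B-cone}; the two tangents to $Q$ through $p_3$ give a pair of such cylinders with the same polarity cone, transversal as in Example~\ref{ex:tangent}, and with forbidden cone $\Cone(2L-E_4-\cdots-E_8,E_1,\ldots,E_8)$, which contains no ample class because every element has non-positive intersection with $E_1$ (your guess that one of $E_4,\ldots,E_8$ is non-positive on the forbidden cone is off, since $(2L-E_4-\cdots-E_8)\cdot E_8=1>0$). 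With that construction substituted for the cuspidal one, the remaining steps of your outline go through.
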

\begin{proof}
  It is enough to consider the construction given in Example~\ref{ex:tangent}(1) for $k=3$.
\end{proof}

\begin{remark}
  The condition \eqref{eq:deg1-B-cone} may be represented in other way.
  The intersection of cone in \eqref{eq:deg1-B-cone} with $B_k$, where $k\ge3$ and $k\le7$, equals
  \[
    \Cone(-K+E_1+E_2, -K+E_1+\frac{E_2}{2}+E_3 , -K+\frac{E_1}{2}+E_2+E_3 ,E_1,\ldots,E_k)
  \]
  For $k=8$ the intersection is bigger than that.

  For some $\mu\in\QQ_{>0}$ $H\sim-K+\sum_{i=1}^8 a_iE_i$, where $a_i\ge0$. Then $H$ satisfies \eqref{eq:deg1-B-cone} if and only if
  \[
    2a_1+2a_2+a_3>2,\qquad a_1,a_2,a_3>0.
  \]
  This is the condition in \cite[Theorem 6.3.1]{CPW15}.
\end{remark}

\begin{theorem}\label{th:deg1-C}
  Let $H\in\relint C$ a very ample divisor, where
  \[
    C=\Cone(L-E_1,\ldots,L-E_6, L-E_7-E_8, E_1,\ldots,E_8).
  \]
  Then $\AffCone_H(Y)$ is generically flexible.
\end{theorem}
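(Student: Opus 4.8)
The strategy is to exhibit, for every ample $H$ in the relative interior of the cone $C=\Cone(L-E_1,\ldots,L-E_6,L-E_7-E_8,E_1,\ldots,E_8)$, a transversal collection $\mathcal{U}$ of $H$-polar open affine subsets of $Y$ which is moreover $H$-complete, so that Theorem~\ref{th:flex} produces an open $\SAut(X)$-orbit on $X=\AffCone_H(Y)$ and hence generic flexibility. Concretely, I would build $\mathcal{U}$ from cylinders of the "pencil of lines through a point" type of Example~\ref{ex:lines}, together with the tangent conic construction of Example~\ref{ex:tangent} (pulled back to $Y$), chosen so that the various cylinders have no common $\mathcal{U}$-invariant subset and so that the forbidden cone $\Forb(\mathcal{U})$ contains no ample class. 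The generators appearing in $C$ — namely $L-E_i$ for $i\le 6$, $L-E_7-E_8$, and the $E_i$ — are exactly the divisor classes one expects to show up in the polarity cones of these cylinders: $L-E_i$ is the class of a line through $p_i$, $L-E_7-E_8$ is the class of the line $L_{7,8}$, and the $E_i$ are the exceptional classes, each of which appears in every polarity cone in these constructions.

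\textbf{Key steps.} First, I would fix a contraction $\sigma\colon Y\to\PP^2$ and consider, for $i=1,\ldots,8$, the cylinder $U_i$ on $Y$ obtained as the isomorphic preimage of $\PP^2\setminus\bigcup_{j\neq i}L_{ij}$ (Example~\ref{ex:lines}); its polarity cone is generated by all $E_k$ and all $L-E_i-E_j$ with $j\neq i$. A naive intersection of the $\Pol(U_i)$ over all $i$ is too small, so the second step is to replace some of these by cylinders whose fibrations contract the "wrong" lines — for instance a tangent-conic cylinder as in Example~\ref{ex:tangent} with the five base points among $p_1,\ldots,p_6$ and the sixth incidence on $p_7$ or $p_8$ — chosen so that $\bigcap_{U\in\mathcal{U}}\Pol(U)$ contains $\relint(C)$. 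The third step is to verify transversality: I would argue that the union $\bigcup U$ is dense and that any $\mathcal{U}$-invariant subset, being $\SAut(U)$-invariant on each cylinder and hence a union of $\AA^1$-fibers there, must be empty or all of $\bigcup U$, by checking that the fiber directions of the chosen cylinders span enough tangent directions (the tangency-degree-2 trick in Example~\ref{ex:tangent}, giving a genuine pair of cylinders, is what rules out a proper invariant subset where a single cylinder would not). The fourth step is to compute $\Forb(\mathcal{U})=\Cone(D_1,\ldots,D_n)$, the cone on the irreducible curves in $Y\setminus\bigcup U$, and to check that some generator of $\Pic_\QQ(Y)$ is non-positive on it — in the analogous Examples~\ref{ex:tangent} and~\ref{ex:cuspidal} it is $E_m$ — so that $\Forb(\mathcal{U})$ meets no ample class; then $H$-completeness holds for all $H\notin\Forb(\mathcal{U})$, in particular for all ample $H$, and $H$-polarity holds for all $H\in\relint\Pol(\mathcal{U})\supseteq\relint(C)$. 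Finally, invoking Theorem~\ref{th:flex} and Theorem~\ref{th:AFKKZ} concludes.

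\textbf{Main obstacle.} The routine parts are the $H$-polarity and $H$-completeness bookkeeping, which reduce to the cone computations of Section~\ref{sec:cones} and are exactly what the \cite{Pe23-sage} module automates; the delicate point is choosing the right finite collection of cylinders and proving transversality for it. The difficulty is that in degree $1$ the Mori cone is large (rank $9$) and many of the standard cylinders individually leave large $\mathcal{U}$-invariant subsets, so one must combine several constructions — lines through distinct points plus at least one tangent-conic pair — and check that together they leave no invariant subset, while simultaneously keeping $\bigcap\Pol(U)\supseteq\relint(C)$ and $\Forb(\mathcal{U})$ ample-free. In practice I expect this amounts to running the module on the cone $C$ with the library of cylinder collections and verifying that one particular collection (most likely a variant of Example~\ref{ex:tangent} adapted to have base points on $p_1,\dots,p_6$ and incidences involving $p_7,p_8$, so that $L-E_7-E_8$ enters the polarity cone) covers $\relint(C)$; the conceptual content is identifying that collection, and the verification itself is a finite polyhedral check.
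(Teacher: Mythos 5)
Your overall framework (Theorem~\ref{th:flex} plus polarity/forbidden cone bookkeeping plus transversality) is the same as the paper's, but the proof is missing its one essential ingredient: you never exhibit the cylinder collection, and the direction you guess is not the one that works. The paper's proof is a one-liner: take just the two line-pencil cylinders of Example~\ref{ex:lines} for $i=7$ and $i=8$. Their polarity cones are $\Cone(E_1,\ldots,E_8,\;L-E_7-E_j\ (j\neq 7))$ and $\Cone(E_1,\ldots,E_8,\;L-E_8-E_j\ (j\neq 8))$, and their intersection is exactly $C$: indeed $L-E_j=(L-E_7-E_j)+E_7=(L-E_8-E_j)+E_8$ for $j\le 6$, while $L-E_7-E_8$ and the $E_k$ lie in both, and the reverse inclusion is a routine polyhedral check. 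The pair is transversal because the two $\AA^1$-fibrations (lines through $p_7$, lines through $p_8$, of fiber classes $L-E_7$ and $L-E_8$) are distinct, so a subset invariant for both pencils on the overlap cannot be proper and nontrivial. The only irreducible curves in the complement of the union are $L_{7,8}$ and $E_1,\ldots,E_8$, hence $\Forb=\Cone(L-E_7-E_8,E_1,\ldots,E_8)$; any class in this cone satisfies $H\cdot E_i\le 0$ for $i\le 6$, so the forbidden cone contains no ample class, and Theorem~\ref{th:flex} concludes.

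The concrete gap in your write-up is the pivot away from the line pencils: you correctly note that intersecting $\Pol(U_i)$ over all $i=1,\ldots,8$ is too small, but you do not try intersecting over the two indices $7,8$ only, which is precisely what the generator $L-E_7-E_8$ of $C$ is signalling ($L_{7,8}$ is the one line common to both complements). Instead you propose a tangent-conic collection in the style of Example~\ref{ex:tangent} and explicitly defer both its identification and the verification of polarity, completeness and transversality to a computer search; since the whole content of the theorem is the existence of such a verified collection for $H\in\relint C$, that deferral leaves the proof unestablished. The tangent-conic route is also genuinely delicate in degree $1$: the pencil $\langle Q,2T\rangle$ supports only five conditions, so three of the eight points necessarily lie on additional fibers whose classes (such as $2L-E_i$) enter the complement, and you neither show that the resulting polarity cone contains $\relint C$ nor that the forbidden cone avoids ample classes. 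So the plan is plausible as a strategy, but as written it does not prove the statement, and the collection it gestures at is not the one the proof actually needs.
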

\begin{proof}
  Consider a collection of a pair of cylinders corresponding to $i=7,8$ in Example~\ref{ex:lines}. 
  Then it is transversal, $C$ is its polarity cone, and the forbidden cone equals $\Cone(L-E_7-E_8, E_1,\ldots,E_8)$, which does not contain ample divisor classes. The statement follows from Example~\ref{th:flex}.
\end{proof}

In degree $\ge2$ the anticanonical class $-K$ is contained in the closure of the polarity cone for some cylinder collection. 
It is not clear whether this holds in degree $1$.

\begin{question}
  Given a del Pezzo surface $Y$ of degree 1,
what is the angle distance between $-K$ and the subset of divisor classes $H$ such that $\AffCone_H(Y)$ is generically flexible?
\end{question}

\section{Weak del Pezzo surfaces}\label{sec:weak}
Let us consider some examples of cylinders on weak del Pezzo surfaces.

\begin{example}\label{ex:weak-deg6-collinear}
  Let $Y$ be a weak del Pezzo surface of degree 6 with a single (-2)-curve of bubble class $(L,-p_1-p_2-p_3)$. Then 
  \[\Ample(Y)=\Cone(L, L-E_1, L-E_2, L-E_3).\]
  Consider a point $p\in\PP^2$ not on the line $\overline{p_1p_2p_3}$ and lines $L_i=\overline{pp_i}\subset\PP^2$.

  Then $\PP^2\setminus \bigcup L_i$ is a cylinder, as well as its preimage in $Y$, which has fibers of bubble class $(L,-p)$.
  The collection of such cylinders with varying $p$ is transversal, $H$-complete on $\Ample(Y)$ and $H$-polar on $\Ample^\circ(Y)$. 
\end{example}

\begin{example}\label{ex:weak-deg6-infinitesimal}
  Let $Y$ be a weak del Pezzo surface of degree 6 with a single (-2)-curve corresponding to the dependency $p_2\succ p_1$. Then 
  \[\Ample(Y)=\Cone(L, L-E_1, L-E_1-E_2, L-E_3,2L-E_1-E_2-E_3).\]
  Consider a point $p\in\PP^2$ on the line $\overline{p_1p_2p_3}$ and some other line $L'\subset\PP^2$ of the bubble class $(L,-p)$. %

  Then $\PP^2\setminus \overline{p_1p_2p_3}\cup L$ is a cylinder, as well as its preimage in $Y$, which has fiber class $L$.
  The collection $C_1$ of such cylinders with varying $p$ is transversal and $H$-polar for $H\in\Ample^\circ(Y)$.

  Consider another map $\sigma'\colon Y\to\PP^2$ which contracts the (-2)-curve and two (-1)-curves $L_{12}$ and $L_{13}$, keeping $E_2$ and $E_3$. Repeating the cylinder construction, we again obtain a transversal collection $C_2$, which is $H$-polar for $H\in\Ample^\circ(Y)$.
  Then the union $C_1\cup C_2$ is $H$-complete for $H\in\Ample^\circ(Y)$. Indeed, its complement consists of curves $E_2,E_1-E_2, L_{13}$ and is not a support of any ample divisor.
\end{example}

\begin{theorem}\label{th:deg6-weak}
  Let $Y$ be a weak del Pezzo surface of degree 6.
  Then the affine cone $\AffCone_H(Y)$ is generically flexible for any very ample divisor class $H$.
\end{theorem}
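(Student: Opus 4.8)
The plan is to reduce the statement to the two explicit cylinder collections constructed in Examples~\ref{ex:weak-deg6-collinear} and \ref{ex:weak-deg6-infinitesimal}, together with the toric case, and to invoke Theorem~\ref{th:flex} in each. The first step is to classify weak del Pezzo surfaces of degree $6$ up to the combinatorial equivalence of Section~\ref{sec:bubble-systems}: a surface $Y\in\mathcal{B}_{dP}$ of degree $6$ is the blowup of $\PP^2$ at a blowup cycle $\eta$ with $|\supp\eta|=3$, and the possible types of $\eta$ are governed by the dependencies listed in the corresponding Convention. Since no four points lie on a line is automatic here, the only admissible nontrivial dependencies among three points are ``$p_1,p_2,p_3$ collinear'' and ``$p_2\succ p_1$''; one cannot have both (the contracted curve in the collinear case would be a $(-2)$-curve disjoint from a further $(-2)$-curve, and a short case check rules out two $(-2)$-curves forcing degree $<6$ or leaving the toric del Pezzo). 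So up to equivalence there are exactly three types: the toric del Pezzo surface of degree $6$ (no $(-2)$-curve), the surface with a single $(-2)$-curve of class $(L,-p_1-p_2-p_3)$, and the surface with a single $(-2)$-curve coming from $p_2\succ p_1$.

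Next I would dispatch the toric case. For the toric del Pezzo surface of degree $6$ the full story is classical: the ample cone is a simplicial cone and for every very ample $H$ one has an $H$-complete, $H$-polar transversal collection of cylinders (for instance the pencils of lines through the $p_i$, as in Example~\ref{ex:lines}, or equivalently the constructions of \cite{CPW15}), hence generic flexibility by Theorem~\ref{th:flex}; alternatively this is already covered by the degree $\ge4$ results cited in the introduction applied after the anticanonical contraction. The point to check is only that passing to the weak surface does not shrink the relevant cone — but here there is no $(-2)$-curve, so $\Ample(Y)$ is as in the smooth case.

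For the remaining two types the argument is essentially assembled already. In the collinear case, Example~\ref{ex:weak-deg6-collinear} exhibits a transversal collection that is $H$-complete on all of $\Ample(Y)$ and $H$-polar on $\Ample^\circ(Y)$, so by Theorem~\ref{th:flex} every $H\in\Ample^\circ(Y)$ (in particular every very ample $H$, since very ample classes lie in the interior of the ample cone) yields a generically flexible cone. In the infinitesimal case $p_2\succ p_1$, neither collection $C_1$ nor $C_2$ of Example~\ref{ex:weak-deg6-infinitesimal} is $H$-complete by itself, but their union $C_1\cup C_2$ is transversal (transversality of $C_1$ already kills all $C_1$-invariant subsets, and adding cylinders only helps), is $H$-polar on $\Ample^\circ(Y)$ since $\Pol(C_1\cup C_2)=\Pol(C_1)\cap\Pol(C_2)\supseteq\Ample^\circ(Y)$, and is $H$-complete there because its forbidden cone $\Cone(E_2,E_1-E_2,L_{13})$ contains no ample class (one notes $(E_1-E_2)\cdot(-K)=0$, say, so this cone meets the ample cone only on its boundary). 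Again Theorem~\ref{th:flex} applies to every very ample $H$.

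The main obstacle is the bookkeeping in the infinitesimal case: one must verify carefully that after changing the contraction $\sigma'\colon Y\to\PP^2$ the repeated cylinder construction is genuinely legitimate — i.e.\ that $\sigma'$ does contract a disjoint union of $(-1)$-curves together with the $(-2)$-curve to distinct smooth points of $\PP^2$, that the resulting collection $C_2$ really is transversal and $H$-polar on $\Ample^\circ(Y)$, and that the complement of $C_1\cup C_2$ is exactly the union of the three listed curves so that the forbidden cone is as claimed. This is where the combinatorial machinery of bubble classes from Sections~\ref{sec:bubble-cycles}--\ref{sec:bubble-systems} (and its Sagemath implementation) does the work; everything else is either classical or an immediate citation of Theorem~\ref{th:flex}.
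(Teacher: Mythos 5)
Your overall strategy (classify the degree-$6$ types, dispatch the toric case, and apply Theorem~\ref{th:flex} to the collections of Examples~\ref{ex:weak-deg6-collinear} and \ref{ex:weak-deg6-infinitesimal}) parallels the paper's, but the classification step is wrong, and this leaves genuine cases untreated. It is not true that the only admissible degenerations of three points are ``$p_1,p_2,p_3$ collinear'' or ``$p_2\succ p_1$'', nor that these cannot coexist. The conditions recalled in Section~\ref{sec:bubble-systems} (at most one point of order one infinitely near a given point, no four points on a line, no seven on a conic) also allow, for $|\supp\eta|=3$: (a) $p_2\succ p_1$ with $p_3$ lying on the line through $p_1$ in the direction of $p_2$, so that $E_1-E_2$ and $L-E_1-E_2-E_3$ are two disjoint $(-2)$-curves (type $A_1+A_1$); (b) a chain $p_3\succ p_2\succ p_1$ (type $A_2$, with $(-2)$-curves $E_1-E_2$ and $E_2-E_3$); and (c) such a chain lying on a line (type $A_2+A_1$). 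So there are six combinatorial types, not three, and your ``short case check ruling out two $(-2)$-curves'' cannot succeed: case (a) is a bona fide weak del Pezzo surface of degree $6$. Your argument says nothing about the types (a)--(c).

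The paper closes exactly this hole differently: it quotes the classification from Dolgachev (Section 8.4.2), by which all weak del Pezzo surfaces of degree $6$ except two are toric, handles \emph{every} toric one at once by citing \cite{AKZ} (for a torus-invariant very ample $H$ the affine cone is a toric affine variety), and only the two non-toric surfaces are fed into Examples~\ref{ex:weak-deg6-collinear} and \ref{ex:weak-deg6-infinitesimal}. Your toric paragraph does not substitute for this: you treat only the surface with no $(-2)$-curve (the smooth del Pezzo of degree $6$), explicitly using that the ample cone is the smooth one and invoking the degree $\ge 4$ results, which do not apply directly to the toric \emph{weak} surfaces of types $A_1+A_1$ and $A_2+A_1$. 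Moreover the omitted type $A_2$ is not toric at all: its $(-2)$-curve $E_2-E_3$ meets three other negative curves ($E_1-E_2$, $E_3$, $L-E_1-E_2$), whereas on a smooth toric surface every negative curve is a boundary curve and meets exactly two other boundary curves; so no toric argument can cover it, and it needs either its own cylinder collection or a reduction to one of the stated examples. Until the three omitted types are handled --- by the paper's toric reduction plus an explicit treatment of the non-toric chain case, or otherwise --- your proof of Theorem~\ref{th:deg6-weak} is incomplete. The remaining ingredients you use (transversality, $H$-polarity on the interior of the ample cone, the forbidden cones, and Theorem~\ref{th:flex}) do match the paper's intended argument for the two example surfaces.
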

\begin{proof}
  There are two non-toric weak del Pezzo surfaces of degree 6, see \cite[Section 8.4.2]{Dolg-CAG}. They are covered in Examples~\ref{ex:weak-deg6-collinear}, \ref{ex:weak-deg6-infinitesimal}.

  If $Y$ is toric, then the statement follows from \cite{AKZ}.
\end{proof}

\appendix
\section{Sagemath module usage workflow}
\label{sec:usage}

Let us describe usage scenarios of the Sagemath module \cite{Pe23-sage}.

To use the provided module, install Sagemath and put the file \texttt{delPezzo\_cylinders.py} from \cite{Pe23-sage} in your working directory.
A del Pezzo surface (i.e., its type) is created with \mintinline{python}|S=Surface(d)|, where \mintinline{python}|d| equals its degree.
 The following example code imports the module and creates a \mintinline{python}|Surface| instance.
 \begin{minted}{python}
 from delPezzo_cylinders import *
 S = Surface(3)
 \end{minted}

The implementations of examples above are the following.

\begin{itemize}
  \item Example~\ref{ex:lines} is implemented via
  \begin{minted}{python}
    Cylinder.make_type_Lines(S, S.E, S.E[i-1])
    \end{minted}    
    \item Example~\ref{ex:tangent} is implemented in the method \mintinline{python}{Cylinder.make_type_tangent}.
    \item The collection in Example~\ref{ex:tangent} is constructed via  
    \begin{minted}{python}
    Cylinder.make_type_tangent(S, S.E, S.E[k:], [S.E[k-1]], \
         [[e] for e in S.E[:k-1]])
    \end{minted}
  \item  Example~\ref{ex:cuspidal} is implemented via
  \begin{minted}{python}
  Cylinder.make_type_cuspcubic(S, S.E, S.E[-4:]) 
  \end{minted}   
  \item The code corresponding to Example~\ref{ex:cuspcubic-flex} (for $B_{5-d}$) is as follows.
  \begin{minted}{python}
  d = 4
  S = Surface(d)
  cylinder = Cylinder.make_type_cuspcubic(S, S.E, S.E[-4:])
  collection = CylinderList([cylinder])
  cone = S.cone_representative(f'B({5-d})')
  collection.is_generically_flexible_on(cone) # True
  \end{minted}
  \item   Example~\ref{ex:weak-deg6-collinear} is implemented via

  \begin{minted}{python}
S = Surface(6, collinear_triples=[[1,2,0]])
cylinder = Cylinder.make(S, 
                    complement=S.E,
                    support=S.E + [S.L-e for e in S.E],
                    fiber=S.L,
                    transversal=True
                    )
  \end{minted}
  \item Example~\ref{ex:weak-deg6-infinitesimal} is implemented in \verb|test_weak_deg6.py| in \cite{Pe23-sage}.
  \end{itemize}

  The module supports all possible kinds of (-2)-curves in the blowup model, namely, arising from collinear triple points, infinitely near points, six points on a conic, and eight points on a cuspidal cubic with a node at one of them. Thus, it supports all weak del Pezzo surfaces.

The lists of (-1)- and (-2)-curves of a constructed (weak) del Pezzo surface are available via methods
 \mintinline{python}|S.minus_one_curves()|  and \mintinline{python}|S.minus_two_curves()| respectively. The former is constructed using \cite[Lemma 8.2.2]{Dolg-CAG}, which states that a divisor class $D$ represents a (-1)-curve if and only if $D^2=-1$ and $D\cdot F\ge0$ for each (-2)-curve $F$.

The following example code chooses the subdivision cone $B_3$ and a cylinder collection from Example~\ref{ex:cuspidal}.
\begin{minted}{python}
from delPezzo_cylinders import *
S = Surface(3)
B3 = S.cone_representative('B(3)')
collection = Cylinder.make_type_cuspcubic(S, S.E, S.E[-4:])
\end{minted}   
The list of available subdivision cones for \mintinline{python}|S| is returned by 
\begin{minted}{python}
NE_SubdivisionCone.cone_types(S)
\end{minted}   
The rays of polarity and forbidden cones of a collection object named \mintinline{python}|c| are returned by \mintinline{python}|c.Pol.rays()| and \mintinline{python}|c.Forb.rays()| respectively.

The properties of the collection in the relative interior of a given cone ($B_3$ here) as well as subdivision cones, where the collection is polar and complete, can be checked with the following methods.

\begin{minted}{python}
collection.is_polar_on(B3) # False
collection.is_complete_on(B3) # True
collection.is_transversal() # True
collection.is_generically_flexible_on(B3) # False
list(collection.compatible_representatives()) # ['B(2)', 'C(2)']
list(collection.compatible_representatives(complete=True)) \
# ['B(2)', 'C(2)']
\end{minted}

The method \mintinline{python}|S.all_cylinders(constructions)| returns a collection comprised of all cylinders of certain constructions (e.g., \mintinline{python}|constructions = ['lines','tangent']|) corresponding to all choices of $\sigma$. Such a collection is useful in conjuction with the following methods.
The method \mintinline{python}|collection.make_polar_on(cone)| filters out the cylinders that are not polar inside the cone, and \mintinline{python}|collection.reduce()| removes abundant cylinders from the collection while keeping the forbidden cone unchanged.

\bibliographystyle{plainurl}
\bibliography{references} 

\begin{thebibliography}{10}

\bibitem{Arzhantsev23}
I.~Arzhantsev.
\newblock Automorphisms of algebraic varieties and infinite transitivity.
\newblock {\em St. Petersburg Mathematical Journal}, 34(2):143--178, mar 2023.
\newblock URL: \url{https://doi.org/10.1090%2Fspmj%2F1749}, \href {https://doi.org/10.1090/spmj/1749} {\path{doi:10.1090/spmj/1749}}.

\bibitem{AFKKZ}
I.~Arzhantsev, H.~Flenner, S.~Kaliman, F.~Kutzschebauch, and M.~Zaidenberg.
\newblock Flexible varieties and automorphism groups.
\newblock {\em Duke Math. J.}, 162(4):767--823, 2013.

\bibitem{AKZ}
I.V. Arzhantsev, K.~Kuyumzhiyan, and M.~Zaidenberg.
\newblock Flag varieties, toric varieties, and suspensions: three instances of infinite transitivity.
\newblock {\em Sbornik: Math}, 203(7):923–949, 2012.

\bibitem{CPW13}
Ivan Cheltsov, Jihun Park, and Joonyeong Won.
\newblock Affine cones over smooth cubic surfaces.
\newblock {\em Journal of the European Mathematical Society}, 18(7):1537--1564, 2016.
\newblock \href {https://doi.org/10.4171/jems/622} {\path{doi:10.4171/jems/622}}.

\bibitem{CPW15}
Ivan Cheltsov, Jihun Park, and Joonyeong Won.
\newblock Cylinders in del {Pezzo} surfaces.
\newblock {\em International Mathematics Research Notices}, 2017(4):1179--1230, 2017.
\newblock \href {https://doi.org/10.1093/imrn/rnw063} {\path{doi:10.1093/imrn/rnw063}}.

\bibitem{Cohen-quartic}
Teresa Cohen.
\newblock Investigations on the plane quartic.
\newblock {\em American Journal of Mathematics}, 41(3):191--211, 1919.
\newblock URL: \url{http://www.jstor.org/stable/2370332}.

\bibitem{Dolg-CAG}
Igor~V. Dolgachev.
\newblock {\em Classical Algebraic Geometry: A Modern View}.
\newblock Cambridge University Press, 2012.
\newblock \href {https://doi.org/10.1017/CBO9781139084437} {\path{doi:10.1017/CBO9781139084437}}.

\bibitem{hoff2022flexibility}
Michael Hoff and Hoang~Le Truong.
\newblock Flexibility of affine cones over mukai fourfolds of genus $g\ge7$, 2022.
\newblock \href {https://arxiv.org/abs/2208.09109} {\path{arXiv:2208.09109}}.

\bibitem{Kaliman21}
S.~Kaliman and D.~Udumyan.
\newblock On automorphisms of flexible varieties.
\newblock {\em Advances in Mathematics}, 396:108112, 2022.
\newblock URL: \url{https://www.sciencedirect.com/science/article/pii/S000187082100551X}, \href {https://doi.org/10.1016/j.aim.2021.108112} {\path{doi:10.1016/j.aim.2021.108112}}.

\bibitem{Kaliman2022embedding}
Shulim Kaliman.
\newblock {Embedding Theorems for Flexible Varieties}.
\newblock {\em Michigan Mathematical Journal}, pages 1 -- 14, 2024.
\newblock \href {https://doi.org/10.1307/mmj/20226268} {\path{doi:10.1307/mmj/20226268}}.

\bibitem{KimPark21}
Jaehyun Kim and Jihun Park.
\newblock Generic flexibility of affine cones over del {Pezzo} surfaces of degree 2.
\newblock {\em International Journal of Mathematics}, 32(14):2150104, 2021.
\newblock \href {https://doi.org/10.1142/S0129167X21501044} {\path{doi:10.1142/S0129167X21501044}}.

\bibitem{KimWon25}
Jaehyun Kim and Joonyeong Won.
\newblock Cylinders in smooth del {Pezzo} surfaces of degree 2.
\newblock {\em Advances in Geometry}, 25(1):71--91, January 2025.
\newblock URL: \url{https://www.degruyter.com/document/doi/10.1515/advgeom-2024-0034/html}, \href {https://doi.org/10.1515/advgeom-2024-0034} {\path{doi:10.1515/advgeom-2024-0034}}.

\bibitem{KPZ11}
T.~Kishimoto, Yu. Prokhorov, and M.~Zaidenberg.
\newblock Group actions on affine cones.
\newblock In {\em Affine Algebraic Geometry, CRM Proc. and Lecture Notes}, volume~54, page 123–163, Providence, RI, 2011. Amer. Math. Soc.

\bibitem{Lubbes}
Niels Lubbes.
\newblock Minimal families of curves on surfaces.
\newblock {\em Journal of Symbolic Computation}, 65:29--48, 2014.
\newblock URL: \url{https://www.sciencedirect.com/science/article/pii/S0747717114000133}, \href {https://doi.org/10.1016/j.jsc.2014.01.003} {\path{doi:10.1016/j.jsc.2014.01.003}}.

\bibitem{Manin}
Yu.~I. Manin.
\newblock {\em Cubic forms: algebra, geometry, arithmetic}.
\newblock North--Holland, Amsterdam, 1974.

\bibitem{PW16}
Jihun Park and Joonyeong Won.
\newblock Flexible affine cones over del {Pezzo} surfaces of degree 4.
\newblock {\em European Journal of Mathematics}, 2(1):304--318, Mar 2016.
\newblock \href {https://doi.org/10.1007/s40879-015-0054-4} {\path{doi:10.1007/s40879-015-0054-4}}.

\bibitem{P13}
A.~Yu. Perepechko.
\newblock Flexibility of affine cones over del {Pezzo} surfaces of degree 4 and 5.
\newblock {\em Functional Analysis and Its Applications}, 47(4):284--289, Oct 2013.
\newblock \href {https://doi.org/10.1007/s10688-013-0035-7} {\path{doi:10.1007/s10688-013-0035-7}}.

\bibitem{Pe20}
Alexander Perepechko.
\newblock Affine cones over cubic surfaces are flexible in codimension one.
\newblock {\em Forum Mathematicum}, 33(2):339--348, 2021.
\newblock \href {https://doi.org/10.1515/forum-2020-0191} {\path{doi:10.1515/forum-2020-0191}}.

\bibitem{Pe23-sage}
Alexander Perepechko.
\newblock del {P}ezzo {S}agemath module, 2023.
\newblock URL: \url{{https://github.com/aperep/delPezzo}}, \href {https://doi.org/10.5281/zenodo.7915020} {\path{doi:10.5281/zenodo.7915020}}.

\bibitem{PZ21flex}
Yuri Prokhorov and Mikhail Zaidenberg.
\newblock {\em Affine Cones over Fano--Mukai Fourfolds of Genus 10 are Flexible}, pages 363--383.
\newblock Springer International Publishing, Cham, 2023.
\newblock \href {https://doi.org/10.1007/978-3-031-11938-5_16} {\path{doi:10.1007/978-3-031-11938-5_16}}.

\bibitem{Saw24}
Masatomo Sawahara.
\newblock Polarized cylinders in du val del pezzo surfaces of degree two, 2024.
\newblock \href {https://arxiv.org/abs/2412.09848} {\path{arXiv:2412.09848}}.

\bibitem{HangTruong2023flex}
Nguyen Thi Anh~Hang and Hoang Le~Truong.
\newblock The affine cones over fano–mukai fourfolds of genus 7 are flexible.
\newblock {\em International Mathematics Research Notices}, 2024(10):8417--8426, 11 2023.
\newblock \href {https://arxiv.org/abs/https://academic.oup.com/imrn/article-pdf/2024/10/8417/57728767/rnad275.pdf} {\path{arXiv:https://academic.oup.com/imrn/article-pdf/2024/10/8417/57728767/rnad275.pdf}}, \href {https://doi.org/10.1093/imrn/rnad275} {\path{doi:10.1093/imrn/rnad275}}.

\bibitem{Won22}
Joonyeong Won.
\newblock Flexibility of affine cones over singular del {P}ezzo surfaces with degree 4.
\newblock {\em East Asian mathematical journal}, 38(3):321--329, 05 2022.

\end{thebibliography}
\end{document}